\definecolor{rojo}{rgb}{1,0,0}
\definecolor{blanco}{rgb}{1,1,1}
\newcommand{\bigint}{\begin{picture}(10,10)
\put(-1,2){\line(1,0){10}}
\end{picture}\kern-14pt\int}
\title{{\bf A new characterization of Sobolev spaces on \boldmath$\Rn$ \\
}}
\author{\Large{\Large Roc Alabern, Joan Mateu and Joan Verdera}}
\newtheorem{teo}{Theorem}
\newtheorem{teor}{Theorem}
\newtheorem{lemma}[teo]{Lemma}
\newtheorem*{CZ}{Lemma}
\theoremstyle{definition}
\newtheorem*{gracies}{Acknowledgements}
\newcommand{\Rn}{{\mathbb R}^n}
\newcommand{\R}{\mathbb R}
\begin{document}
\date{}

\maketitle


\begin{abstract}

In this paper we present a new characterization of Sobolev spaces
 on $\Rn$. Our characterizing condition is obtained via a quadratic multiscale expression
 which exploits the particular symmetry properties of Euclidean space. An
interesting feature of our condition is that depends only on the
metric of $\Rn$ and the Lebesgue measure, so that one can define
Sobolev spaces of any order of smoothness on any metric measure
space.
\end{abstract}

\section{Introduction}

In this paper we present a new characterization of the Sobolev
spaces $W^{\alpha,p}$ on $\Rn,$ where the smoothness index $\alpha$
is any positive real number and $ 1 < p < \infty$. Thus
$W^{\alpha,p}$ consists of those functions $f \in L^p =L^p(\Rn)$
such that $(-\Delta)^{\alpha/2} f \in L^p$. Here $\Delta$ is the
Laplacian and $(-\Delta)^{\alpha/2} f$ is defined on the Fourier
transform side by $ |\xi|^\alpha \hat{f}(\xi).$ If $0< \alpha <n$
this means that $f$ is a function in $L^p$ which is the Riesz
potential of order $\alpha$ of some other function~$g$ in $L^p$,
namely $f = c_n 1/|x|^{n-\alpha}\ast g$. If $\alpha$ is integer,
then $W^{\alpha,p}$ is the usual space of those functions in $L^p$
such that all distributional derivatives up to order $\alpha$ are in
$L^p.$

To convey a feeling about the nature of our condition we first
discuss the case $\alpha=1.$  Consider the square function
\begin{equation*}
S(f)^2(x) = \int_0^\infty \left| \frac{f_{B(x,\,t)} -
f(x)}{t}\right|^2\,\frac{dt}{t}, \quad x \in \Rn\,.
\end{equation*}
Here $f$ is a locally integrable function on $\Rn$ and
$f_{B(x,\,t)}$ denotes the mean of $f$ on the open ball with
center~$x$ and radius~$t$. One should think of $\frac{f_{B(x,\,t)} -
f(x)}{t}$ as a quotient of increments of $f$ at the point $x.$ Our
characterization of $W^{1,p}$ reads as follows.

\begin{teor}\label{T1}
If $1 < p <\infty$, then the following are equivalent.

(1) $f \in W^{1,p}$

(2) $f \in L^p$ and $S(f) \in L^p$.

If any of the above conditions holds then
$$
\|S(f)\|_p  \simeq \|\nabla f\|_p\,.
$$
\end{teor}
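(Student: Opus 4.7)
The strategy is Fourier-analytic. Observe first that $f_{B(x,t)}-f(x)=(\psi_t\ast f)(x)$, where $\psi_t=\phi_t-\delta_0$ and $\phi_t(y)=t^{-n}\chi_{B(0,1)}(y/t)/|B(0,1)|$ is the normalized characteristic function of the ball. Since $\phi$ is a radial probability density, $\widehat{\phi}$ is smooth and even, so $\widehat{\phi}(\eta)-1=-c_1|\eta|^2+O(|\eta|^4)$ near the origin. My plan is to prove the equivalence first when $p=2$ by Plancherel, and then transfer to general $p$ by viewing $S(f)$ as a vector-valued Littlewood--Paley square function applied to $g:=(-\Delta)^{1/2}f$.

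For $p=2$, Plancherel and Fubini give
\[
\|S(f)\|_2^2=\int_{\Rn}|\widehat{f}(\xi)|^2\int_0^\infty\frac{|\widehat{\phi}(t\xi)-1|^2}{t^2}\,\frac{dt}{t}\,d\xi.
\]
The substitution $s=t|\xi|$ together with the radiality of $\widehat{\phi}$ collapses the inner integral to $c|\xi|^2$ for a finite constant $c$: the quadratic vanishing at $0$ gives an $O(s)$ integrand near $0$, while the weight $1/s^3$ handles the tail. Hence $\|S(f)\|_2=c^{1/2}\|\nabla f\|_2$. For general $p$, set $g=(-\Delta)^{1/2}f$ and factor $(\psi_t\ast f)(x)/t=(\tilde K_t\ast g)(x)$, where the Fourier multiplier of $\tilde K_t$ is $\Lambda(t\xi)$ with $\Lambda(\eta):=(\widehat{\phi}(\eta)-1)/|\eta|$. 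The multiplier $\Lambda$ is bounded, smooth on $\Rn\setminus\{0\}$, vanishes like $|\eta|$ at the origin, and decays to $0$ at infinity; thus $\tilde K_t$ behaves like a classical Littlewood--Paley kernel and $S(f)(x)=\|\tilde K_t\ast g(x)\|_{L^2(dt/t)}$. The inequality $\|S(f)\|_p\lesssim\|g\|_p$ now follows from the Benedek--Calderón--Panzone vector-valued Calderón--Zygmund theorem applied to the Hilbert-space-valued kernel $(\tilde K_t)_t$. The reverse direction comes from the Calderón reproducing formula $g=c^{-1}\int_0^\infty \tilde K_t^\ast\ast(\tilde K_t\ast g)\,dt/t$, which is a direct consequence of the $L^2$ identity $\int_0^\infty|\Lambda(t\xi)|^2\,dt/t=c$; dualizing against $h\in L^{p'}$ and applying Cauchy--Schwarz in $t$ together with the $L^{p'}$-bound for the adjoint square function yields $\|g\|_p\lesssim\|S(f)\|_p$. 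Combining with $\|(-\Delta)^{1/2}f\|_p\simeq\|\nabla f\|_p$ (Riesz-transform $L^p$-boundedness, $1<p<\infty$) closes the equivalence and supplies the quantitative estimate.

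\textbf{Main obstacle.} The technical heart of the argument is verifying the vector-valued Calderón--Zygmund/Hörmander conditions for $\tilde K_t$: because $\phi$ is merely $\chi_{B(0,1)}/|B(0,1)|$, its Fourier transform enjoys only Bessel-type decay $|\widehat{\phi}(\eta)|\lesssim|\eta|^{-(n+1)/2}$, and each derivative loses further powers. I would handle this by splitting $\Lambda$ with a smooth dyadic partition of unity: near the origin the quadratic zero of $\widehat{\phi}-1$ absorbs the $1/|\eta|$ singularity and $\Lambda$ is smooth; away from the origin, standard stationary-phase estimates for $\widehat{\phi}$ yield enough regularity and decay for $\tilde K$ and $\nabla \tilde K$ to satisfy the pointwise CZ estimates with values in $L^2(dt/t)$. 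Once these kernel estimates are in hand, the remainder of the proof is routine verification.
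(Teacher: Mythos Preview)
Your overall architecture coincides with the paper's: the $p=2$ identity via Plancherel, the factorization through $g=(-\Delta)^{1/2}f$ (the paper writes this as $f=c_n|\cdot|^{1-n}\ast g$, which is the same thing), the appeal to Benedek--Calder\'on--Panzone for the $L^2(dt/t)$-valued operator, and the duality/polarization argument for the reverse inequality are exactly what the paper does. The one substantive difference is the verification of the H\"ormander condition. The paper works entirely on the spatial side with the explicit kernel $K_t(x)=\fint_{B(x,t)}|y|^{1-n}\,dy-|x|^{1-n}$, splits into the three ranges $t<|x|/3$, $|x|/3\le t<2|x|$, $t\ge 2|x|$, and handles the delicate middle range (where $\partial B(x,t)$ may pass through the origin) via a pointwise logarithmic bound on $\mathrm{p.v.}\int_{B(x,t)}y\,|y|^{-n-1}\,dy$. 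Your proposed route---dyadic decomposition of the multiplier $\Lambda(\eta)=(\widehat\phi(\eta)-1)/|\eta|$ together with Bessel asymptotics for $\widehat\phi$---is a legitimate alternative, closer in spirit to H\"ormander--Mikhlin machinery; it trades the paper's hands-on geometric estimates for multiplier-theoretic bookkeeping, and both approaches ultimately succeed because $\Lambda$ vanishes to first order at $0$ and decays at $\infty$. One small gap: for the implication $(2)\Rightarrow(1)$ the paper runs a separate mollification argument (regularize $f$, use $S(f_\epsilon)\le S(f)\ast\phi_\epsilon$ by Minkowski to bound $\|\nabla f_\epsilon\|_p$ uniformly, then extract a weak-$\ast$ limit), whereas your reproducing-formula step tacitly assumes that $g=(-\Delta)^{1/2}f$ is already an $L^p$ function to which the formula applies. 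This is easily fixed by the same mollification, but should be made explicit.
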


The symbol $A \simeq B $ means, as usual, that for some constant~$C$
independent of the relevant parameters attached to the quantities
$A$ and $B$ we have $ C^{-1} \,B \leq A \leq C \,B .$

Notice that condition (2) in Theorem \ref{T1} above is of a metric
measure space character, because only involves integrals over balls.
It can be used to define in any metric measure space~$X$ a notion of
Sobolev space $W^{1,p}(X)$. It is not clear to the authors what are
the relations of this space with other known notions of Sobolev
space in a metric measure space, in particular with those of Hajlasz
\cite{H}  or Shanmugalingam \cite{S} (see also \cite{HK}).

The proof of Theorem \ref{T1} follows a classical route (see
\cite{Str}). The relevant issue is the necessary condition. First,
via a Fourier transform estimate we show that
$$
\|S(f)\|_2 = c \,\|\nabla f\|_2\,,
$$
for good functions $f.$  In a second step, we set up a singular
integral operator $T$ with values in $L^2(dt/t)$ such that
$$
\|T(f)\|_{L^2 (\Rn,\, L^2(dt/t))} = \|S(f)\|_2\,.
$$
The kernel of $T$ turns out to satisfy H\"{o}rmander's condition, so
that we can appeal to a well known result of Benedek, Calder\'{o}n and
Panzone  \cite[Theorem 3.4, p.~492]{GR} on vector valued
Calder\'{o}n-Zygmund Theory to conclude the proof. The major technical
difficulty occurs in checking H\"{o}rmander's condition.

The proof extends without pain to cover orders of smoothness
$\alpha$ with $0 < \alpha < 2 .$ The square function $S(f)$ has to
replaced by
$$
S_\alpha(f)^2(x) = \int_0^\infty \left| \frac{f_{B(x,t)} -
f(x)}{t^\alpha}\right|^2\,\frac{dt}{t}, \quad x \in \Rn\,.
$$
The result is then that, for $0 < \alpha < 2 ,$  $f \in
W^{\alpha,p}$ is equivalent to $f \in L^p$ and $S_\alpha(f) \in L^p
.$

Notice that
\begin{equation}\label {eq2}
S_\alpha(f)^2(x) = \int_0^\infty \left| \fint_{B(x ,\, t)}
\frac{f(y)-f(x)}{t^\alpha} \, dy \right|^2 \frac{dt}{t}, \quad x \in
\Rn\,,
\end{equation}
where the barred integral on a set stands for the mean over that
set. Stricharzt (\cite{Str}) used long ago the above square function
for $0 <\alpha< 1$ to characterize $W^{\alpha,p}.$  However the
emphasis in \cite{Str} was on a larger variant of $S_\alpha(f)$ in
which the absolute value is inside the integral in $y$ in
\eqref{eq2}. In the interval $1 \leq \alpha < 2$ putting the
absolute value inside the integral destroys the characterization,
because then one gives up the symmetry properties of $\Rn.$ For
instance, $S_\alpha(f)$ vanishes if $f$ is a first degree
polynomial.

There are in the literature square functions very close to
\eqref{eq2} which characterize  $W^{\alpha,p},$ for $0 < \alpha < 2
$. For example, first differences of $f$ may be replaced by second
differences and the absolute value may be placed inside the integral
(\cite{Str} and \cite [Chapter V]{St}). The drawback with second
differences is that they do not make sense in the setting of metric
measure spaces. See also the paper by Dorronsoro \cite{D}.

We now proceed to explain the idea for the characterization of
$W^{2,p}.$ Take a smooth function~$f$ and consider its Taylor
expansion up to order~$2$ around $x$
\begin{equation}\label{eq3}
 f(y)= f(x)+
\nabla f(x)\cdot (y-x) + \sum_{|\beta|=2} \partial^\beta f(x)
(y-x)^\beta + R \,,
\end{equation}
where $R$ is the remainder and $\beta$ a multi-index of length $2.$
Our goal is to devise a square function which plays the role of
$S_1(f)$ (see \eqref{eq2} for $\alpha=1$) with respect to second
order derivatives. The first remark is that the mean on $B(x,t)$ of
the homogeneous polynomial of degree $1$ in \eqref{eq3} is zero.
Now, the homogeneous Taylor polinomial of degree $2$ can be written
as
\begin{equation}\label{eq4}
\sum_{|\beta|=2} \frac{\partial_\beta f(x)}{\beta !} (y-x)^\beta  =
H(y-x)+ \frac{1}{2n} \Delta f(x)\,|y-x|^2\,,
\end{equation}
for a harmonic homogeneous polynomial $H$ of degree $2$. Hence the
mean on $B(x,t)$ of the homogeneous Taylor polinomial of degree $2$
is
$$
\fint_{B(x,\,t)} \frac{1}{2n} \Delta f(x)\,|y-x|^2 \,dy\,.
$$
This suggests defining
\begin{equation}\label{eq5}
S_2(f)(x)^2 = \int_0^\infty \left| \fint_{B(x,\,t)}
\frac{\left(f(y)-f(x)- \frac{1}{2n} (\Delta f)_{B(x,\,t)}\,|y-x|^2
\right)}{t^2}\,dy \right|^2\,\frac{dt}{t}, \quad x \in \Rn\,.
\end{equation}
We cannot replace $(\Delta f)_{B(x,\,t)}$ by $\Delta f(x)$ in the
preceding definition, because the mean guarantees a little extra
smoothness which one needs in a certain Fourier transform
computation. Notice that, according to the remarks made before on
the mean on the ball $B(x,t)$ of the homogeneous Taylor polynomials
of degrees $1$ and $2$, in the expression above for $S_2(f)(x)$ one
may add the missing terms to get the full Taylor polynomial of
degree $2$, except for the fact that $\Delta f(x)$ should be
replaced by $(\Delta f)_{B(x,\,t)}$. Were $f$ smooth enough, one
could even add the homogeneous Taylor polynomial of degree $3$,
because it is odd (taking $x$ as the origin) and thus its mean on
$B(x,t)$ vanishes. This explains why whatever we can prove for
$\alpha = 2$ will also extend to the range $2 < \alpha < 4$ by
considering an appropriate square function, which turns out to be
\begin{equation*}
S_\alpha (f)(x)^2 = \int_0^\infty \left| \fint_{B(x,\,t)}
\frac{\left(f(y)-f(x)- \frac{1}{2n} \Delta f (x) \,|y-x|^2
\right)}{t^\alpha}\,dy \right|^2\,\frac{dt}{t}, \quad x \in \Rn\,.
\end{equation*}
One should remark that in the range $2 < \alpha < 4$ the mean
$(\Delta f)_{B(x,\,t)}$ can safely be replaced by $\Delta f(x)\,.$
Here is our second order theorem.

\begin{teor}\label{T2}
If $1 < p < \infty$, then the following are equivalent.

(1) $f \in W^{2,p}$

(2) $f \in L^p$ and there exists a function $g \in L^p$  such that
$S_2(f,g) \in L^p,$  where the square function $S_2(f,g)$ is defined
by
$$
S_2(f,g)(x)^2 = \int_0^\infty \left| \fint_{B(x,\,t)}
\frac{\left(f(y)-f(x)- g_{B(x,\,t)}\,|y-x|^2 \right)}{t^2}\,dy
\right|^2\,\frac{dt}{t}, \quad x \in \Rn\,.
$$

If $f \in W^{2,p}$ then one can take $g = \Delta f / 2n$ and if (2)
holds then necessarily $g = \Delta f / 2n$, a. e.

If any of the above conditions holds then

$$
\|S(f,g)\|_p \simeq  \|\Delta f\|_p\,.
$$
\end{teor}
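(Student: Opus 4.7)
I would follow the Theorem~\ref{T1} blueprint sketched by the authors: first an $L^2$ identity via the Fourier transform, then an extension to $L^p$ by setting up a vector-valued singular integral whose kernel satisfies H\"ormander's condition, and finally a separate argument for the sufficiency direction by forcing $g = \Delta f/(2n)$ pointwise. For Step~1, write $Q(x,t) := \fint_{B(x,t)}[f(y)-f(x)-g_{B(x,t)}|y-x|^2]\,dy/t^2 = (f_{B(x,t)}-f(x))/t^2 - \tfrac{n}{n+2}g_{B(x,t)}$ using $\fint_{B(x,t)}|y-x|^2\,dy = \tfrac{n}{n+2}t^2$, and let $\Phi$ be the radial multiplier of the ball average, $\Phi(r) = \fint_{B(0,1)}e^{-2\pi i r e_1 \cdot y}\,dy = 1 - \tfrac{2\pi^2}{n+2}r^2 + O(r^4)$. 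For $f$ Schwartz and $g = \Delta f/(2n)$, Plancherel in $x$ gives $\hat Q(\xi,t) = \hat f(\xi)\bigl[\tfrac{\Phi(t|\xi|)-1}{t^2}+\tfrac{2\pi^2|\xi|^2}{n+2}\Phi(t|\xi|)\bigr]$, and the substitution $s = t|\xi|$ collapses $\int_0^\infty |\hat Q(\xi,t)|^2\,dt/t$ to $C|\xi|^4|\hat f(\xi)|^2$. The constant $C$ is finite because the bracket vanishes to second order at $s=0$ (this is precisely the role of $(\Delta f)_{B(x,t)}$ in lieu of $\Delta f(x)$ in the definition), so $\|S_2(f,\Delta f/(2n))\|_2 \simeq \|\Delta f\|_2$ on Schwartz functions.

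For Step~2, writing $f$ as the Newton potential of $\Delta f$ (modulo polynomials) lets us interpret $Q(x,t) = T(\Delta f)(x,t)$ for a linear operator $T$ with $L^2(dt/t)$-valued kernel
\[
K(x,z,t) = c_n\,\frac{\fint_{B(x,t)}|y-z|^{2-n}\,dy - |x-z|^{2-n}}{t^2} - \frac{\chi_{B(x,t)}(z)}{2(n+2)|B(x,t)|}.
\]
Step~1 provides the $L^2$ bound on $T$, so by the vector-valued Benedek--Calder\'on--Panzone theorem the $L^p$ bound for $1<p<\infty$ reduces to the H\"ormander condition
\[
\int_{|z-x_0|>2|x-x'|}\|K(x,z,\cdot)-K(x',z,\cdot)\|_{L^2(dt/t)}\,dz \leq C
\]
uniform in $x,x'$. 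Verifying this is the main technical obstacle: one splits the $t$-integral according to whether $t \ll |z-x|$, $t \sim |z-x|$, or $t \gg |z-x|$, using the built-in cancellation $\Phi(s) = 1 + O(s^2)$ for small $t$, and exploiting the smoothness of $|y-z|^{2-n}$ and of the characteristic-function average for large $t$. The bookkeeping is expected to be heavier but structurally parallel to the $\alpha=1$ case that the authors single out as the main difficulty for Theorem~\ref{T1}. The matching lower bound $\|\Delta f\|_p \lesssim \|S_2(f,\Delta f/(2n))\|_p$ then follows either from a Calder\'on-type reproducing formula built from the $L^2$ symbol identity of Step~1, or by exhibiting an inverse operator of $T$ of the same singular-integral type.

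For Step~3, given $f,g \in L^p$ with $S_2(f,g) \in L^p$, Taylor expansion at a point where $f$ is $C^2$ and $g$ is continuous, combined with $\fint_{B(x,t)}(y-x)_i(y-x)_j\,dy = \delta_{ij}\,t^2/(n+2)$, yields $Q(x,t) \to \tfrac{\Delta f(x)}{2(n+2)} - \tfrac{n}{n+2}g(x)$ as $t \to 0^+$; since $\int_0^1 |Q(x,t)|^2\,dt/t$ is finite a.e., this limit must vanish, forcing $g(x) = \Delta f(x)/(2n)$. To transfer the identification to rough $f,g$, mollify by an even radial $\psi_\ep$: the hypotheses are preserved under convolution and the argument above applies to $f*\psi_\ep$, $g*\psi_\ep$, yielding $g*\psi_\ep = \Delta(f*\psi_\ep)/(2n)$, hence $g = \Delta f/(2n)$ in the distributional sense. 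Thus $\Delta f \in L^p$ and Step~2 closes the equivalence with the claimed norm comparison.
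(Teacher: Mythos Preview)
Your proposal follows the paper's approach: an $L^2$ identity via Plancherel, extension to $L^p$ via vector-valued Calder\'on--Zygmund theory with an explicit convolution kernel, and sufficiency by mollification plus the observation that finiteness of the square function at a point forces the $t\to 0$ limit of the integrand to vanish, which pins down $g=\Delta f/(2n)$. Two points where the paper's execution differs from what you anticipate are worth flagging. First, for the H\"ormander condition the paper does \emph{not} need three $t$-regimes, and the verification is actually \emph{simpler} than for Theorem~\ref{T1}: writing $K_t=H_t-\tfrac{M}{2n}t^2\chi_t$ with $H_t=(I_2\ast\chi_t)-I_2$, the key observation is that $I_2$ is harmonic off the origin, so the mean value property gives $H_t(x-y)-H_t(x)=0$ identically whenever $t<|x|/2$; only the single regime $t\ge|x|/2$ requires a gradient estimate, and the $t^2\chi_t$ piece is handled by a direct computation (its support in $t$ is the interval between $|x-y|$ and $|x|$). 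Your three-regime plan with the Fourier cancellation $\Phi(s)=1+O(s^2)$ would also go through, but it overlooks this shortcut. Second, for the reverse inequality $\|\Delta f\|_p\lesssim\|S_2(f)\|_p$ the paper simply invokes the standard polarization/duality argument from the $L^2$ isometry (as in \cite[p.~507]{GR}), rather than building a reproducing formula or an explicit inverse operator.
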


Notice that condition (2) in Theorem \ref{T2} only involves the
Euclidean distance on $\Rn$ and integrals with respect to Lebesgue
measure. Thus one may define a notion of $W^{2,p}(X)$ on any metric
measure space $X$. For more comments on that see section 4.

Again the special symmetry properties of $\Rn$ play a key role. For
instance, $S_2$ annihilates second order polynomials. Theorem
\ref{T2} has a natural counterpart for smoothness indexes $\alpha$
satisfying $ 2 < \alpha < 4.$ The result states that a function $f
\in W^{\alpha,p}$ if and only if $f \in L^p$ and there exists a
function $g \in L^p$ such that $S_\alpha(f,g) \in L^p,$ where
$$
S_\alpha(f,g)(x)^2 = \int_0^\infty \left| \fint_{B(x,\,t)}
\frac{\left(f(y)-f(x)- g(x)\,|y-x|^2 \right)}{t^\alpha}\,dy
\right|^2\,\frac{dt}{t}, \quad x \in \Rn\,.
$$
Notice that in the range  $ 2 < \alpha < 4$ we do not need to
replace $g(x)$ by the mean $g_{B(x,\,t)}.$

Before stating our main result, which covers all orders of
smoothness and all~$p$ with $1 < p < \infty,$ we need to discuss a
couple of preliminary issues. The first is  the analogue of
\eqref{eq4} for homogeneous polynomials of any even degree. Let $P$
be a homogeneous polynomial of degree~$2j.$ Then $P$ can be written
as
$$
P(x)= H(x) + \Delta^j P \;\frac{1}{L_j}\,|x\,|^{2j}\,,
$$
where $L_j = \Delta^j (|x\,|^{2j})$ and $H$ satisfies $\Delta^j H =
0.$ This follows readily from \cite[3.1.2, p.~69]{St}. Considering
the spherical harmonics expansion of $P(x)$ we see that
$\int_{|x|=1} H(x)\,d\sigma = 0$, $\sigma$~being the surface measure
on the unit sphere, and thus that $\int_{|x|\leq t} H(x)\,dx = 0$,
$t > 0.$ The precise value of $L_j,$ which can be computed easily,
will not be needed.

Our main result involves a square function associated with a
positive smoothness index $\alpha.$ Let $N$ be the unique integer
such that $2N \le \alpha < 2N+2\,.$ Given locally integrable
functions $f, g_1, \dots , g_N$ we set
$$
S_\alpha(f,g_1,g_2,\dotsc,g_N)(x)^2 = \int_0^\infty \left|
\fint_{B(x,\,t)} \frac{R_N(y,x)}{t^\alpha}\,dy
\right|^2\,\frac{dt}{t}, \quad x \in \Rn\,,
$$
where
$$
R_N(y,x)= f(y)-f(x)- g_1(x)\,|y-x|^2 -\dotsb-
g_{N-1}(x)\,|y-x|^{2(N-1)}- (g_N)_{B(x,\,t)}|y-x|^{2N}
$$
if $\alpha = 2N \,,$ and
$$
R_N(y,x)= f(y)-f(x)- g_1(x)\,|y-x|^2 -\dotsb-
g_{N-1}(x)\,|y-x|^{2(N-1)}- g_N (x)\,|y-x|^{2N}
$$
if $2N <\alpha <  2N+2 \,.$

\begin{teor}\label{T3}
Given $\alpha > 0$ let $N$ be the unique integer such that $2N \leq
\alpha  < 2N+2.$  If $1 < p < \infty$, then the following are
equivalent.

(1) $f \in W^{\alpha,p}$

(2) $f \!\in\! L^p$ and there exist functions $g_j \!\in\! L^p$,
$\!1\! \!\leq\! j \!\leq\! N$, such that
$S_\alpha(f,g_1,g_2,\dotsc,g_N) \!\in\! L^p$.

 If  $f \in W^{\alpha,p}$ then one can take $g_j = \Delta^j f /
L_j$ and if (2) holds then necessarily $g_j = \Delta^j f / L_j$\,
a. e.

If any of the above conditions holds then
$$
\|S_\alpha (f,g_1,\dotsc,g_N)\|_p \simeq \| (-\Delta)^{\alpha/2}
f\|_p\,.
$$
\end{teor}

Again condition (2) in Theorem \ref{T3} only involves the Euclidean
distance on $\Rn$ and integrals with respect to Lebesgue measure.
Thus one may define a notion of $W^{\alpha,p}(X)$ for any positive
$\alpha$ and any $1 <  p < \infty$ on any metric measure space~$X$.
For previous notions of higher order Sobolev spaces on metric
measure spaces see \cite{LLW}. See section 4 for more on that.

The proof of Theorem \ref{T3} proceeds along the lines sketched
before for $\alpha =1.$ First we use a Fourier transform computation
to obtain the relation
$$
\|S_{\alpha}(f, \Delta f / L_1,\dotsc, \Delta^N f / L_N)\|_2 = c \,
\| (-\Delta)^{\alpha/2} f\|_2\,.
$$
Then we introduce a singular integral operator with values in
$L^2(dt/t^{2\alpha+1})$ and we check that its kernel satisfies
H\"{o}rmander's condition.

The paper is organized as follows. In sections 1, 2 and 3  we prove
respectively Theorems \ref{T1}, \ref{T2} and \ref{T3}. In this way
readers interested only in first order Sobolev spaces may
concentrate in section 1. Those readers interested in the main idea
about jumping to orders of smoothness $2$ and higher may read
section 2. Section 3 is reserved to those interested in the full
result. In any case the technical details for the proof of Theorem
\ref{T1} are somehow different of those for orders of smoothness $2$
and higher. The reason is that H\"{o}rmander's condition involves
essentially taking one derivative of the kernel and is precisely the
kernel associated to the first order of smoothness that has minimal
differentiability.

Our notation and terminology are standard. For instance, we shall
adopt the usual convention of denoting by $C$ a constant independent
of the relevant variables under consideration and not necessarily
the same at each occurrence.

If $f$ has derivatives of order $M$ for some non-negative integer
$M$, then $\nabla^M f \!=\! (\partial^\beta f)_{|\beta|=M}$ is the
vector with components the partial derivatives of order $M$ of $f$
and $|\nabla^M f|$ its Euclidean norm.

 The Zygmund class on $\Rn$ consists of those continuous
functions $f$ such that, for some constant $C$,
$$
|f(x+h)+f(x-h)-2 f(x)| \le C\,|h|,\quad x, h \in \Rn\,.
$$
The basic example of a function in the Zygmund class which is not
Lipschitz is $f(x)= |x| \log |x|$, $x \in \Rn.$

The Schwartz class consists of those infinitely differentiable
functions on $\Rn$ whose partial derivatives of any order decrease
faster than any polynomial at $\infty$.

After the first draft of the paper was made public Professor Wheeden
brought to our attention his articles \cite{W1} and \cite{W2}. In
\cite{W1} a general result is proven which, in particular, contains
Theorem 3 for $0<  \alpha < 2 $. In \cite{W2} Sobolev spaces with
respect to special homogeneous spaces are considered for $0 < \alpha
< 1 \,.$

\section{Proof of Theorem \ref{T1}}

The difficult part is the necessity of condition (2) and we start
with this.

As a first step we show that
\begin{equation}\label{eq7}
\|S_1(f)\|_2 = c \,\|\nabla f\|_2
\end{equation}
for a dimensional constant $c.$  Set
$$\chi(x)= \frac{1}{|B(0,1)|}\, \chi_{B(0,1)}(x)$$
and
$$
\chi_t(x)= \frac{1}{t^n} \chi(\frac{x}{t})\,,
$$
so that, by Plancherel,
\begin{equation*}
\begin{split}
\int_{\Rn}  S_1 (f)(x)^2 \,dx & = \int_0^\infty \int_{\Rn} \left|(f
\ast \chi_t)(x) - f(x)\right|^2 \,dx \;\frac{dt}{t^3} \\*[5pt]
 & = c\, \int_0^\infty \int_{\Rn} \left|\hat{\chi}(t \xi)-1\right|^2 \left|\hat{f}(\xi) \right|^2\,d\xi \;
 \frac{dt}{t^3}\,.
\end{split}
\end{equation*}
Since $\hat{\chi}$ is radial, $\hat{\chi}(\xi) = F(|\xi|)$ for a
certain function $F$ defined on $[0,\infty).$  Exchange the
integration in $d\xi$ and $dt$ in the last integral above and make
the change of variables $\tau = t\, |\xi|.$ Then
\begin{equation*}
\begin{split}
\int_{\Rn}  S_1 (f)(x)^2 \,dx & = c \,\int_{\Rn} \int_0^\infty
\left|F(\tau)-1 \right|^2 \,\frac{d\tau}{\tau^3}\; |\hat{f}(\xi)|^2
|\xi|^2 \,d\xi \\*[5pt]
 & = c\,\int_0^\infty  \left|F(\tau)-1 \right|^2 \,\frac{d\tau}{\tau^3} \;\; \|\nabla f\|_2^2
\end{split}
\end{equation*}
and \eqref{eq7} is reduced to showing that
\begin{equation}\label{eq8}
\int_0^\infty \left|F(\tau)-1 \right|^2 \,\frac{d\tau}{\tau^3} <
\infty \,.
\end{equation}
Set $B = B(0,1)$ and $e_1 =(1,0,\dotsc,0) \in \Rn.$ Then
\begin{equation*}
\begin{split}
F(t) & = \hat{\chi}(t e_1) = \fint_{B} \exp{(- i x_1 t)}
\,dx\\*[5pt] & = \fint_{B} \left(1-i x_1 t - \frac{1}{2} x_1^2
t^2+\dotsb \right)\,dx \\*[5pt] &= 1- \frac{1}{2} \,\fint_{B} x_1^2
\,dx \;t^2+\dotsb,
\end{split}
\end{equation*}
which yields
$$
F(t)-1 = O(t^2), \quad \text{as}\quad t \rightarrow 0
$$
and shows the convergence of \eqref{eq8} at $0.$

Since $F(|\xi|) = \hat{\chi}(\xi)$ is the Fourier transform of an
integrable function, $F(\tau)$ is a bounded function and so the
integral \eqref{eq8} is clearly convergent at $\infty.$


We are left with the case of a general $p$ between $1$ and $\infty.$
If $f \in W^{1,p},$ then $f = g \ast 1/|x|^{n-1}$ for some $g \in
L^p$ (with $1/|x|^{n-1}$ replaced by $\log|x|$ for $n=1$). Set
$I(x)= 1/|x|^{n-1}.$ Then
\begin{equation*}
f_{B(x,\,t)}-f(x)  = (f \ast \chi_t)(x) -f(x)  = (g \ast K_t)(x)\,,
\end{equation*}
where
\begin{equation}\label{eq9}
K_t(x)= (I \ast \chi_t)(x)-I(x)= \fint_{B(x,\,t)} I(y)\,dy -I(x)\,.
\end{equation}
If we let $T(g)(x)= (g \ast K_t)(x),\; x \in \Rn$, then  one can
rewrite $S_1 (f)(x)$ as
$$
S_1 (f)(x) = \left(\int_0^\infty \left| (g \ast K_t)(x)
\right|^2\,\frac{dt}{t^3}\right)^{\frac{1}{2}} = \|Tg(x)
\|_{L^2(dt/t^3)}\,.$$ Then \eqref{eq7} translates into
$$
\int_{\Rn} \|Tg(x) \|_{L^2(dt/t^3)}^2 \,dx = c\,\|g\|_2^2\,,
$$
and we conclude that $T$ is an operator mapping isometrically
$L^2(\Rn)$ into $L^2(\Rn, L^2(dt/t^3)).$ If the kernel $K_t(x)$ of
$T$ satisfies H\"{o}rmander's condition
$$
\int_{|x|\geq 2 |y|} \|K_t(x-y)-K_t(x)\|_{L^2(dt/t^3)}\,dx \le
C,\quad y \in \Rn
$$
then a well known result of Benedek, Calder\'{o}n and Panzone on vector
valued singular integrals (see \cite[Theorem 3.4, p.~492]{GR})
yields the $L^p$ estimate
$$
\int_{\Rn} \|Tg(x) \|_{L^2(dt/t^3)}^p \,dx \leq C_p\,\|g\|_p^p \,,
$$
which can be rewritten as
$$
\|S_1(f)\|_p \le C_p \,\|\nabla f \|_p \,.
$$
The reverse inequality follows from polarization from \eqref{eq7}
 by a well known argument (\cite[p.~507]{GR}) and so the proof of the necessary
condition is complete.
 We are going to
prove the following stronger version of H\"{o}rmander's condition
\begin{equation}\label{eq10}
\|K_t(x-y)-K_t(x)\|_{L^2(dt/t^3)} \le C\,\frac{|y|}{|x|^{n+1}},\quad
y \in \Rn\,,
\end{equation}
for almost all $x$ satisfying $|x|\geq 2 |y|.$

To prove \eqref{eq10} we deal separately with three intervals in the
variable $t.$

\vspace*{7pt}

Interval 1: $t < \frac{|x|}{3}.$ From the definition of $K_t$ in
\eqref{eq9} we obtain
\begin{equation*}
 \nabla K_t(x)= (\nabla I \ast \chi_t)(x)- \nabla I(x)\,.
\end{equation*}
Notice that, in the distributions sense, the gradient of $I$ is a
constant times the vector valued Riesz transform, namely
$$
\nabla I = -(n-1) p.v. \frac{x}{|x|^{n+1}}\,.
$$
If $|x|\geq 2 |y|$, then the segment $[x-y,x]$ does not intersect
the ball $B(0,|x\,|/2)$ and thus
\begin{equation}\label{eq12}
|K_t(x-y)-K_t(x)| \le |y| \sup_{z \in [x-y,y]}|\nabla K_t(z)| \,.
\end{equation}
If $t < |x|/3$ and $z \in [x-y,y]$, then $B(z,t) \subset \Rn
\setminus B(0,|x \,|/6),$ and hence
\begin{equation}\label{eq13}
\nabla K_t(z)= \fint_{B(z,\,t)} (\nabla I(w)- \nabla I(z))\,dw \,.
\end{equation}
Taylor's formula up to order $2$ for $\nabla I (w)$ around $z$
yields
$$
\nabla I (w) = \nabla I (z)+ \nabla^2
I(z)(w-z)+O(\frac{|w-z|^2}{|x|^{n+2}})\,,
$$
where $\nabla^2 I(z)(w-z)$ is the result of applying the matrix
$\nabla^2 I(z)$ to the vector $w-z.$ The mean value of $\nabla^2
I(z)(w-z)$ on $B(z,t)$ is zero, by antisymmetry, and thus, by
\eqref{eq13},
$$
|\nabla K_t(z)| \le C\, \frac{t^2}{|x|^{n+2}}
$$
and so, by \eqref{eq12}
$$
|K_t(x-y)-K_t(x)| \le C\, |y|\frac{t^2}{|x|^{n+2}}\,.
$$
Integrating in $t$ we finally get
\begin{equation*}
\left(\int_0^{|x|/3} |K_t(x-y)-K_t(x)|^2
\,\frac{dt}{t^3}\right)^{\frac{1}{2}} \le C \,\frac{|y|}{|x|^{n+2}}
\left(\int_0^{|x|/3} t\,dt \right)^{\frac{1}{2}} =
C\,\frac{|y|}{|x|^{n+1}}\,.
\end{equation*}

\vspace*{7pt}

Interval 2:  $|x\,|/3 < t <  2 |x\,|.$ The function $I \ast \chi_t$
is continuously differentiable on $\Rn \setminus S_t$, $S_t = \{x :
|x|=t\},$ because its distributional gradient is given by $I \ast
\nabla \chi_t$ and each component of $\nabla \chi_t$ is a Radon
measure supported on $S_t.$ The gradient of $I \ast \chi_t$ is given
at each point $x \in \Rn \setminus S_t$ by the principal value
integral
$$
p.v. (\nabla I \ast \chi_t)(x)= -(n-1) p.v. \fint_{B(x,\,t)}
\frac{y}{|y\,|^{n+1}}\,,
$$
which exists for all such $x.$ The difficulty in the interval under
consideration is that it may happen that $|x|=t$ and then the
gradient of $I \ast \chi_t$ has a singularity at such an $x.$ We
need the following estimate.

\begin{lemma}\label{PV}
$$\left| p.v. \int_{B(x,\,t)} \frac{y}{|y|^{n+1}}\,dy \right| \le C\,
\log \frac{|x\,|+t}{||x\,|-t|},\quad x \in \Rn\,.
$$
\end{lemma}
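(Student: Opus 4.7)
The plan is to pass to spherical coordinates around the origin, which makes the structure of the kernel $y/|y|^{n+1}$ transparent and reveals why the principal value converges automatically. With $y = \rho\,\omega$, $\rho > 0$, $\omega \in S^{n-1}$, one has $\frac{y}{|y|^{n+1}}\,dy = \frac{\omega}{\rho}\,d\rho\,d\sigma(\omega)$, so that
$$
p.v.\!\int_{B(x,t)} \frac{y}{|y|^{n+1}}\,dy = \int_0^\infty \frac{d\rho}{\rho}\,\Omega(\rho),\qquad \Omega(\rho) := \int_{\{\omega\,:\,\rho\omega\in B(x,t)\}} \omega\,d\sigma(\omega).
$$

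Writing $r=|x|$ and $\hat x = x/r$, the condition $\rho\omega \in B(x,t)$ becomes $\omega\cdot\hat x > c(\rho) := (\rho^2+r^2-t^2)/(2\rho r)$. This gives three regimes: $c(\rho) > 1$ (the sphere $\rho S^{n-1}$ misses $B(x,t)$, so $\Omega(\rho) = 0$), which occurs for $\rho \notin [|r-t|,\,r+t]$; $c(\rho) \le -1$ (the sphere lies entirely inside $B(x,t)$), possible only when $r<t$ and $\rho \le t-r$, in which case $\Omega(\rho) = \int_{S^{n-1}} \omega\,d\sigma = 0$ by odd symmetry; and $c(\rho)\in(-1,1)$ (the sphere meets $B(x,t)$ in a spherical cap), for $\rho$ in the band $[|r-t|,\,r+t]$. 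The middle regime is the crucial one: its vanishing is exactly what makes the principal value well defined and kills the contribution from small $\rho$ when $0\in B(x,t)$.

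It remains to bound $\Omega(\rho)$ on the band. Rotational symmetry around the axis $\hat x$ forces $\Omega(\rho)$ to be parallel to $\hat x$, and a direct computation in spherical coordinates adapted to $\hat x$ (polar angle $\theta$ with $\omega\cdot\hat x = \cos\theta$) gives
$$
|\Omega(\rho)| = \frac{|S^{n-2}|}{n-1}\,(1-c(\rho)^2)^{(n-1)/2} \le \frac{|S^{n-2}|}{n-1}.
$$
Integrating in $\rho$ then yields
$$
\left|\,p.v.\!\int_{B(x,t)} \frac{y}{|y|^{n+1}}\,dy\,\right| \le \frac{|S^{n-2}|}{n-1}\int_{|r-t|}^{r+t} \frac{d\rho}{\rho} = \frac{|S^{n-2}|}{n-1}\,\log\frac{|x|+t}{||x|-t|},
$$
which is the claimed estimate for $n\ge 2$. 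For $n=1$ the bound follows from a direct evaluation of the one-dimensional principal value $p.v.\!\int_{x-t}^{x+t} dy/y$, which in fact equals $\log\frac{|x|+t}{||x|-t|}$.

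I do not anticipate a serious technical obstacle. The only delicate point is the handling of the principal value, but the choice of coordinates reduces it to the fact that the integral of the identity vector field over a whole sphere vanishes, so no explicit limit needs to be taken; the boundedness of the cap integral then turns the radial logarithm into the stated estimate.
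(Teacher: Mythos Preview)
Your argument is correct and follows essentially the same route as the paper: both proofs exploit rotational symmetry about the axis through $x$ to reduce to a single scalar component, observe that full spheres centered at the origin contribute nothing (which handles the principal value), and then integrate $1/\rho$ over the radial band $|r-t|\le\rho\le r+t$. The only difference is that you compute the spherical-cap integral $\Omega(\rho)$ exactly, obtaining the explicit constant $|S^{n-2}|/(n-1)$, whereas the paper simply bounds $|y_1|/|y|^{n+1}\le 1/|y|^n$ and integrates over the containing annulus; this is a cosmetic refinement rather than a different strategy.
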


\begin{proof}
Assume without loss of generality that $x=(x_1,0,\dotsc,0).$ The
coordinates $y_j$, $j \neq 1$, change sign under reflection around
the $y_1$ axes. Hence
$$
p.v. \int_{B(x,\,t)} \frac{y_j}{|y|^{n+1}}\,dy = 0, \quad 1 <j \leq
n\,.
$$
Now, if $|x\,|< t,$
\begin{equation*}
\begin{split}
\left| p.v. \int_{B(x,\,t)} \frac{y_1}{|y|^{n+1}}\,dy \right| & =
\left| p.v. \int_{B(x,\,t)\setminus B(0,\,t-|x|)}
\frac{y_1}{|y|^{n+1}}\,dy \right|\\*[5pt] & \le C\,
\int_{t-|x|}^{t+|x|} \frac{dt}{t} = C\,\log \frac{t+|x|}{t-|x|}\,.
\end{split}
\end{equation*}

If $|x\,|> t,$
\begin{equation*}
\begin{split}
\left| p.v. \int_{B(x,\,t)} \frac{y_1}{|y|^{n+1}}\,dy \right| & =
\left|\int_{B(x,\,t)} \frac{y_1}{|y|^{n+1}}\,dy \right|\\*[5pt] &
\le C\, \int_{|x|-t}^{|x|+t} \frac{dt}{t} = C\,\log
\frac{|x|+t}{|x|-t}\,.
\end{split}
\end{equation*}
\end{proof}

Assume without loss of generality that $y =(y_1,0,\dotsc,0).$ The
distributional gradient of~$I \ast \chi_t$ is
$$
 -(n-1) p.v.
\frac{y}{|y|^{n+1}} \ast \chi_t \,,
$$
which is in $L^2.$ Then $I \ast \chi_t \in W^{1,2}$ and consequently
is absolutely continuous on almost all lines parallel to the first
axes. Therefore
$$
K_t(x-y)-K_t(x) = - \int_0^1 \nabla K_t(x- \tau y) \cdot y \,d\tau
$$
for almost all $x$ and
$$
|K_t(x-y)-K_t(x)| \le C\, \frac{|y|}{|x|^{n}}\,\int_0^1 \left(1+\log
\frac{|x- \tau y|+t}{||x- \tau y|-t|}\right)\,d\tau\,.
$$
Hence
\begin{equation*}
\begin{split}
\left(\int_{|x|/3}^{2|x|} |K_t(x-y)-K_t(x)|^2
\,\frac{dt}{t^3}\right)^{\frac{1}{2}} & \le C
\,\frac{|y|}{|x|^{n+1}} \left(\int_{|x|/3}^{2|x|} \left( \int_0^1
\left(1+\log \frac{|x- \tau y|+t}{||x- \tau y|-t|}\right) \,d\tau
\right)^2 \frac{dt}{t} \right)^{\frac{1}{2}} \\*[5pt] & =
C\,\frac{|y|}{|x|^{n+1}}\, D\,,
\end{split}
\end{equation*}
where the last identity is a definition of $D.$ Applying Schwarz to
the inner integral in $D$ and then changing the order of integration
we get
$$
D^2 \le\int_0^1 \left( \int_{|x|/3}^{2|x|}\left( 1+ \log \frac{|x-
\tau y|+t}{||x- \tau y|-t|}\right)^2 \,\frac{dt}{t} \right) \,d\tau
\,.
$$
For each $\tau$ make the change of variables
$$
s = \frac{t}{|x- \tau y|}
$$
to conclude that
$$
D^2 \le \int_{2/9}^4 \left(1+\log
\frac{1+s}{|1-s|}\right)^2\,\frac{ds}{s}\,.
$$

\vspace*{7pt}

Interval 3: $ 2 |x\,| \leq t .$  For each $z$ in the segment
$[x-y,y]$ we have $B(0,t/4) \subset B(z,t).$ Then, by~\eqref{eq13},
\begin{equation*}
\begin{split}
\nabla K_t(z) & = -(n-1) \left(p.v. \frac{1}{|B(z,t)|}
\int_{B(z,\,t)} \frac{w}{|w|^{n+1}}\,dw -
\frac{z}{|z|^{n+1}}\right)\\*[5pt] & = -(n-1) \left(
\frac{1}{|B(z,t)|} \int_{B(z,\,t)\setminus B(0,\,t/4)}
\frac{w}{|w|^{n+1}}\,dw - \frac{z}{|z|^{n+1}}\right)
\end{split}
\end{equation*}
and so
$$
|\nabla K_t(z)| \le C\, \frac{1}{|x|^n},\quad z \in [x-y,y]\,.
$$
Hence, owing to \eqref{eq12},
$$
|K_t(x-y)-K_t(x)| \le C\, \frac{|y|}{|x|^{n}}
$$
and thus
$$
\left(\int_{2|x |}^{\infty} |K_t(x-y)-K_t(x)|^2
\,\frac{dt}{t^3}\right)^{\frac{1}{2}} \le C \,\frac{|y|}{|x|^{n}}
\left(\int_{2|x|}^{\infty} \frac{dt}{t^3} \right)^{\frac{1}{2}} =
C\,\frac{|y|}{|x|^{n+1}}\,,
$$
which completes the proof of the strengthened form of H\"{o}rmander's
condition \eqref{eq10}.

We turn now to prove that condition (2) in Theorem \ref{T1} is
sufficient for $f \in W^{1,p}.$ Let $f \in L^p$ satisfy $S_1(f) \in
L^p. $ Take an infinitely differentiable function $\phi \geq 0$ with
compact support in $B(0,1),$ $\int \phi = 1$ and set
$\phi_\epsilon(x)= \frac{1}{\epsilon^n} \phi(\frac{x}{\epsilon})$,
$\epsilon>0.$ Consider the regularized functions $f_\epsilon = f
\ast \phi_\epsilon.$ Then $f_\epsilon$ is infinitely differentiable
and $\|\nabla f_\epsilon\|_p \le \|f\|_p \|\nabla
\phi_\epsilon\|_1,$ so that $f_\epsilon \in W^{1,p}.$ Thus, as we
have shown before,
$$
\|\nabla f_\epsilon\| \simeq \|S_1(f_\epsilon)\|_p\,.
$$
We want now to estimate $\|S_1(f_\epsilon)\|_p$ independently of
$\epsilon.$  Since
$$
(f_\epsilon)_{B(x,\,t)} - f_\epsilon(x) = \left((f \ast \chi_t
-f)\ast\phi_\epsilon \right)(x)\,,
$$
Minkowsky's integral inequality gives
\begin{equation*}
S_1(f_\epsilon)(x)  = \|(f_\epsilon)_{B(x,t)} -
f_\epsilon(x)\|_{L^2(dt/t^3)} \le (S_1(f)\ast \phi_\epsilon)(x)\,,
\end{equation*}
and so $\|\nabla f_\epsilon\| \le C\,\|S_1(f)\|_p$, $\epsilon
>0. $ For an appropriate sequence $\epsilon_j \rightarrow 0$ the
sequences~$ \partial_k f_{\epsilon_j}$ tend in the weak $\star$
topology of $L^p$ to some function $g_k \in L^p$, $1 \le k \le n .$
On the other hand, $f_\epsilon \rightarrow f$ in $L^p$ as $\epsilon
\rightarrow 0$ and thus $\partial_k f_\epsilon \rightarrow
\partial_k f $, $1 \le k \le  n$ in the weak  topology of distributions.
Therefore $\partial_k f = g_k$ for all $k$ and so $f \in W^{1,p}.$

\section{Proof of Theorem \ref{T2}}

The difficult direction is (1) implies (2) and this is the first we
tackle. We start by showing that if $f \in W^{2,2}$ then
\begin{equation}\label{eq15}
\|S_2(f)\|_2 = c\, \|\Delta f\|_2
\end{equation}
 where the square function~$S_2(f)$
is defined in \eqref{eq5}. To apply Plancherel in the $x$
 variable it is convenient to write the innermost integrand in \eqref{eq5} as
\begin{equation*}
\begin{split}
& \fint_{B(x,\,t)} \left(f(y)-f(x)- \left(\fint_{B(x,\,t)}
\frac{\Delta f(z)}{2n}\,dz \right) |y-x|^2\right)\,dy \\*[5pt] =&
\fint_{B(0,\,t)} \left(f(x+h)-f(x)- \left(\fint_{B(0,\,t)}
\frac{\Delta f(x+k)}{2n}\,dk \right) |h|^2\right)\,dh\,.
\end{split}
\end{equation*}
Applying Plancherel we get, for some dimensional constant~$c$,
$$
c\,\|S_2(f)\|_2^2 \!=\! \int_0^\infty\int_{\mathbb{R}^n}
\fint_{B(0,\,t)} \left( \exp{(i \xi h)} -1 + \left(\fint_{B(0,\,t)}
\exp{(i \xi k)} \,dk \right) \frac{|h|^2 |\xi|^2}{2n}\right)\,dh\,
|\hat{f}(\xi)|^2\,d\xi \,\frac{dt}{t^5}\,.
$$
Make appropriate dilations in the integrals with respect to the
variables $h$ and $k$ to bring the integrals on $B(0,1).$ Then use
that the Fourier transform of $\frac{1}{|B(0,1)|} \chi_{B(0,1)}$ is
a radial function, and thus of the form $F(|\xi|)$ for a certain
function $F$ defined on $[0,\infty).$ The result is
$$
c\,\|S_2(f)\|_2^2 =\int_{\mathbb{R}^n} \int_0^\infty \left|
F(t\,|\xi|)-1 + t^2 |\xi|^2 F(t\,|\xi|) \frac{1}{2n}
\fint_{B(0,1)}|h|^2\,dh   \right|^2
\,\frac{dt}{t^5}|\hat{f}(\xi)|^2\,d\xi \,.
$$
The change of variables $\tau = t\, |\xi|$ yields
$$
c\,\|S_2(f)\|_2^2 = I \; \|\Delta f\|_2^2
$$
where $I$ is the integral
\begin{equation}\label{eq16}
I = \int_0^\infty \left|F(\tau)-1 + \tau^2 F(\tau) \frac{1}{2n}
\fint_{B(0,1)}|h|^2\,dh \right|^2\,\frac{d\tau}{\tau^5}\,.
\end{equation}
The only task left is to prove that the above integral is finite.
Now, as $\tau \rightarrow 0$,
\begin{equation*}
\begin{split}
F(\tau) & = \fint_{B(0,1)} \exp{(i h_1 \tau)}\,dh \\*[5pt] & =
\fint_{B(0,1)} \left( 1+ i h_1 \tau -\frac{1}{2} h_1^2 \tau^2
+\dotsb \right)\,dh \\*[5pt] & = 1-\frac{1}{2} \left(\fint_{B(0,1)}
h_1^2 \,dh\right) \tau^2 + O(\tau^4)\,.
\end{split}
\end{equation*}
Hence
\begin{multline*}
 F(\tau)-1+ \tau^2 F(\tau) \frac{1}{2n} \fint_{B(0,1)}|h|^2 \,dh
\\*[5pt]
= \left(- \frac{1}{2} \fint_{B(0,1)} h_1^2 \,dh+ \frac{1}{2n}
\fint_{B(0,1)}|h|^2\,dh \right) \tau^2 + O(\tau^4)= O(\tau^4)\,,
\end{multline*}
because clearly $\fint_{B(0,1)}|h|^2\,dh = n \fint_{B(0,1)} h_1^2
\,dh .$ Therefore the integral \eqref{eq16} is convergent
at~$\tau=0.$

To deal with the case $\tau \rightarrow \infty$ we recall that $F$
can be expressed in terms of Bessel functions. Concretely, one has
(\cite[Appendix B.5, p.~429]{Gr})
$$
|B(0,1)|\; F(\tau) = \frac{J_{n/2}(\tau)}{|\tau|^{n/2}}\,.
$$
The asymptotic behavior of $J_{n/2}(\tau)$ gives the inequality
$$
|F(\tau)| \le C\, \frac{1}{\tau^{\frac{n+1}{2}}}\le C\,
\frac{1}{\tau}\,,
$$
which shows that the integral \eqref{eq16} is convergent at
$\infty.$

We turn our attention to the case $1 < p < \infty.$ Let $I_2(x)$
stand for the kernel defined on the Fourier transform side by
$$
\hat{I_2}(\xi)= \frac{1}{|\xi|^2}\,.
$$
In other words, $I_2$ is minus the standard fundamental solution of
the Laplacian.  Thus $I_2(x) = c_n \,1/|x|^{n-2}$ if $n \geq 3$,
$I_2(x) = - \frac{1}{2 \pi}\,\log|x|$ if $n=2$ and $ I_2(x)= -
\frac{1}{2}\,|x|$ if $n=1.$ Given any $f \in W^{2,p}$ there exists
$g \in L^p$ such that $f = I_2 \ast g$ (indeed, $g= - \Delta f$). We
claim that there exists a singular integral operator $T(g)$ taking
values in $L^2(dt/t^5)$ such that
\begin{equation}\label{eq17}
S_2(f)(x)= \|T(g)(x)\|_{L^2(dt/t^5)}\,.
\end{equation}
Set
$$\chi(x)= \frac{1}{|B(0,1)|}\, \chi_{B(0,1)}(x)$$
and
$$
\chi_t(x)= \frac{1}{t^n} \chi(\frac{x}{t})\,.
$$
Then, letting $M = \fint_{B(0,\,1)} |z|^2 \,dz ,$
\begin{equation*}
\begin{split}
\fint_{B(x,\,t)} \left(f(y)-f(x)- \frac{1}{2n} (\Delta
f)_{B(x,\,t)}\,|y-x|^2 \right)\,dy & = ((I_2\ast\chi_t - I_2-
\frac{M}{2n}\,t^2\,\chi_t)\ast g )(x)\\*[5pt] & = (K_t \ast g)(x)\,,
\end{split}
\end{equation*}
where
$$
K_t(x)= (I_2\ast\chi_t)(x) - I_2(x)- \frac{M}{2n}\,t^2\,\chi_t(x)\,.
$$
 Setting $T(g)(x) = (K_t \ast g)(x)$ we get \eqref{eq17} from the
definition of $S_2(f)$ in \eqref{eq5}. Then \eqref{eq15} translates
into
$$
\int_{\Rn} \|Tg(x) \|_{L^2(dt/t^5)}^2 \,dx = c\,\|g\|_2^2 \,,
$$
and we conclude that $T$ is an operator mapping isometrically
$L^2(\Rn)$ into $L^2(\Rn, L^2(dt/t^5))$, modulo the constant $c\,.$
If the kernel $K_t(x)$ of $T$ satisfies H\"{o}rmander's condition
$$
\int_{|x|\geq 2 |y|} \|K_t(x-y)-K_t(x)\|_{L^2(dt/t^5)}\,dx \le
C,\quad y \in \Rn,
$$
then a well known result of Benedek, Calder\'{o}n and Panzone on vector
valued singular integrals (see \cite[Theorem 3.4, p.~492]{GR})
yields the $L^p$ estimate
$$
\int_{\Rn} \|Tg(x) \|_{L^2(dt/t^5)}^p \,dx \leq C_p\,\|g\|_p^p \,,
$$
which can be rewritten as
$$
\|S_2(f)\|_p \le C_p \,\|\Delta f \|_p \,.
$$
The reverse inequality follows from polarization from \eqref{eq15}
by a well known duality argument (\cite[p.~507]{GR})  and so the
proof of the necessary condition is complete.

We are going to prove the following stronger version of H\"{o}rmander's
condition
\begin{equation*}
\|K_t(x-y)-K_t(x)\|_{L^2(dt/t^5)} \le
C\,\frac{|y|^{1/2}}{|x|^{n+1/2}},\quad |x|\geq 2 |y|\,.
\end{equation*}
 For this we deal separately with the kernels $H_t(x)=
(I_2\ast\chi_t)(x) - I_2(x)$ and $t^2\,\chi_t(x).$  For
$t^2\,\chi_t(x)$ we first remark that the quantity
$|\chi_t(x-y)-\chi_t(x)|$ is non-zero only if $|x-y|< t< |x|$ or
$|x|< t< |x-y|,$ in which cases takes the value $1/c_n \,t^n$, $c_n
= |B(0,1)|.$ On the other hand, if $|x|\geq 2 |y|\,$ then each $z$
in the segment joining $x$ and $x-y$ satisfies $|z|\geq |x|/2.$
Assume that $|x-y|<  |x|$ (the case~$|x|< |x-y|$ is similar). Then
\begin{equation*}
\begin{split}
\left(\int_{0}^\infty (t^2 \,(\chi_t(x-y)-\chi_t(x)))^2
\,\frac{dt}{t^5} \right)^{\frac{1}{2}} & =
c\,\left(\int_{|x-y|}^{|x|}
\frac{dt}{t^{2n+1}}\right)^{\frac{1}{2}}\\*[5pt]
 & = c\,\left(
\frac{1}{|x-y|^{2n}}-\frac{1}{|x|^{2n}}\right)^{\frac{1}{2}} \le C\,
\frac{|y|^{1/2}}{|x|^{n+1/2}}\,.
\end{split}
\end{equation*}

We check now that $H_t$ satisfies the stronger form of H\"{o}rmander's
condition. If $t < |x\,|/2$, then the origin does not belong to the
ball $B(x-y,t)$ nor to the ball $B(x,t).$ Since $I_2$ is harmonic
off the origin, the mean of $I_2$ on these balls is the value of
$I_2$ at the center. Therefore $H_t(x-y)-H_t(x)= 0$ in this case.

If $t \geq |x|/2$, then
$$
|H_t(x-y)-H_t(x)| \le |y|\,\sup_{z \,\in \,[x-y,x]} |\nabla H_t(z)|
\le C\, \frac{|y|}{|x|^{n-1}}\,.
$$
The last inequality follows from
$$
\nabla H_t(z) = \fint_{B(z,\,t)} \nabla I_2(w)\,dw - \nabla
I_2(z)\,,
$$
$ |\nabla I_2(z)| \le C\, 1/|z|^{n-1} \le C\, 1/|x|^{n-1}$ and
$$
|\fint_{B(z,\,t)} \nabla I_2(w)\,dw| \le \fint_{B(z,\,t)}
\frac{1}{|w|^{n-1}} \le C\, \frac{1}{|z|^{n-1}}\,.
$$
Therefore
$$
\left(\int_{0}^\infty |H_t(x-y)-H_t(x)|^2 \,\frac{dt}{t^5}
\right)^{\frac{1}{2}} \le C\, \frac{|y|}{|x|^{n-1}}\,
\left(\int_{|x|/2}^\infty \frac{dt}{t^5} \right)^{\frac{1}{2}} =
C\,\frac{|y|}{|x|^{n+1}}\,.
$$

We turn now to prove that condition (2) in Theorem \ref{T2} is
sufficient for $f \in W^{2,p}.$ Let $f$ and $g$ in $L^p$ satisfy
$S_2(f,g) \in L^p. $ Take an infinitely differentiable function
$\phi \geq 0$ with compact support in $B(0,1),$ $\int \phi = 1$ and
set $\phi_\epsilon(x)= \frac{1}{\epsilon^n}
\phi(\frac{x}{\epsilon}),\; \epsilon>0.$ Consider the regularized
functions $f_\epsilon = f \ast \phi_\epsilon$ and  $g_\epsilon =
g\ast \phi_\epsilon.$  Then $f_\epsilon$ is infinitely
differentiable and $\|\Delta f_\epsilon\|_p \le \|f\|_p \,\|\Delta
\phi_\epsilon\|_1,$ so that $f_\epsilon \in W^{2,p}.$  Recalling
that $M= \fint_{B(0,1)} |z|^2 \,dz,$ we get, by Minkowsky's integral
inequality,
\begin{equation*}
\begin{split}
S_2(f_\epsilon,g_\epsilon)(x) & = \|(f_\epsilon \ast
\chi_t)(x)-f_\epsilon(x) -(g_\epsilon \ast \chi_t)(x)\,M^2\,
t^2\|_{L^2(dt/t^5)} \\& = \|\left((f \ast \chi_t)-f -(g \ast
\chi_t)\,M^2\, t^2 \ast \phi_\epsilon \right)(x) \|_{L^2(dt/t^5)}\\&
\le \left(S_2(f,g)\ast \phi_\epsilon \right)(x)\,.
\end{split}
\end{equation*}
Now we want to compare $(1/2n) \Delta f_\epsilon$ and $g_\epsilon.$
Define
$$
D_\epsilon(x)  = \left(\int_0^\infty M^2 \left|\frac{1}{2n} (\Delta
f_\epsilon \ast \chi_t)(x)- (g_\epsilon \ast \chi_t)(x)
\right|^2\,\frac{dt}{t} \right)^{1/2}.
$$
Then
\begin{equation*}
\begin{split}
D_\epsilon(x) & \le S_2(f_\epsilon)(x) + S_2(f_\epsilon,
g_\epsilon)(x)
\\*[5pt] &\le S_2(f_\epsilon)(x) + \left(S_2(f, g)(x)\ast
\phi_\epsilon \right)(x)\,,
\end{split}
\end{equation*}
and thus $D_\epsilon(x)$ is an $L^p$ function. In particular
$D_\epsilon(x) < \infty,$ for almost all $x \in \Rn.$ Hence
$$
|(1/2n) \Delta f_\epsilon (x)- g_\epsilon (x)| = \lim_{t \rightarrow
0} \left| (1/2n) (\Delta f_\epsilon \ast \chi_t)(x)- (g_\epsilon
\ast \chi_t)(x) \right| =0\,,
$$
for almost all $x \in \Rn,$ and so $ (1/2n) \Delta f_\epsilon
\rightarrow g $ in $L^p$ as $\epsilon \rightarrow 0.$ Since
$f_\epsilon \rightarrow f$ in $L^p$ as $\epsilon \rightarrow 0,$
then $\Delta f_\epsilon \rightarrow \Delta f$ in the weak topology
of distributions. Therefore $ (1/2n) \Delta f = g $ and the proof is
complete.

\section{Proof of Theorem \ref{T3}}

The difficult direction in Theorem \ref{T3} is to show that
condition (2)  is necessary for $f \in W^{\alpha,p}.$  The proof
follows the pattern already described in the preceding sections. One
introduces an operator $T$ taking values in $L^2(dt/t^{2\alpha+1})$
and shows via a Fourier transform estimate that $T$ sends $L^2(\Rn)$
into $L^2(\Rn, L^2(dt/t^{2\alpha+1}))$ isometrically (modulo a
multiplicative constant). The second step consists in showing that
its kernel satisfies H\"{o}rmander's condition, after which one appeals
to a well known result of Benedek, Calder\'{o}n and Panzone on vector
valued singular integrals to finish the proof.

\subsection{The fundamental solution of \boldmath$(-\Delta)^{\alpha/2}$}

 Let $I_\alpha$ be the
fundamental solution of $(-\Delta)^{\alpha/2},$ that is, $I_\alpha$
is a function such that $ \hat{I_\alpha}(\xi)= |\xi|^{-\alpha}$ and
is normalized prescribing some behavior at $\infty.$ It is crucial
for our proof to have an explicit expression for $I_\alpha.$ The
result is as follows (see \cite{ACL} or \cite[p. 3699]{MOPV}).

If $\alpha$ is not integer  then
\begin{equation}\label{eq19}
I_\alpha(x) = c_{\alpha,n}\;|x|^{\alpha-n},\quad x \in \Rn\,,
\end{equation}
 for some constant $c_{\alpha,n}$ depending only on $\alpha$ and
$n.$

The same formula works if $\alpha$ is an even integer and the
dimension is odd or if $\alpha$ is an odd integer and the dimension
is even.

The remaining cases, that is, $\alpha$ and $n$ are even integers or
$\alpha$ and $n$ are odd integers are special in some cases. If
$\alpha < n$ formula \eqref{eq19} still holds, but if $\alpha$ is of
the form $n+ 2 N,$ for some non-negative integer $N$, then
$$
I_\alpha(x) = c_{\alpha,n}\,|x|^{\alpha-n} \,(A+B\log|x\,|),\quad x
\in \Rn\,,
$$
where $c_{\alpha,n},$ $A$ and $B$ are constants depending on
$\alpha$ and $n$, and $B \neq 0.$ Thus in this cases (and only in
this cases) a logarithmic factor is present. For instance, if
$\alpha=n$, then $I_\alpha(x) = B \log|x\,|.$ If $n=1$ and
$\alpha=2$, then $I_2(x) = -(1/2)\, |x\,|$ and there is no
logarithmic factor.


\subsection{The case \boldmath$p=2$}\label{3.1}
Given a positive real number $\alpha$ let $N$ be the unique integer
satisfying $2N \le \alpha  < 2N+2.$ Define the square function
associated with $\alpha$ by
\begin{equation}\label{eq20}
S_\alpha(f)^2(x) = \int_0^\infty \left| \fint_{B(x,\,t)}
\frac{\rho_N(y,x)}{t^\alpha}\,dy \right|^2\,\frac{dt}{t}, \quad x
\in \Rn\,,
\end{equation}
where $\rho_N(y,x)$ is
$$
 f(y)-f(x)- \frac{1}{2n} \Delta f(x)\,|y-x|^2 -\dotsb-
\frac{1}{L_{N-1}} \Delta^{N-1}f(x)\,|y-x|^{2(N-1)}- \frac{1}{L_{N}}
(\Delta^{N} f)_{B(x,\,t)}|y-x|^{2N}
$$
if $\alpha = 2N\,,$ and
$$
 f(y)-f(x)- \frac{1}{2n} \Delta f(x)\,|y-x|^2 -\dotsb-
\frac{1}{L_{N-1}} \Delta^{N-1}f(x)\,|y-x|^{2(N-1)}- \frac{1}{L_{N}}
\Delta^{N}f(x)\,|y-x|^{2N}
$$
if $2N < \alpha < 2N+2\,.$

 Recall
that $L_j = \Delta^j (|x\,|^{2j})$ and that the role which the $L_j$
play in Taylor expansions was discussed just before the statement of
Theorem \ref{T3} in the introduction.

 In this
subsection we prove that
\begin{equation}\label{eq21}
\|S_\alpha(f)\|_2 = c \, \|(-\Delta)^{\alpha/2}(f)\|_2\,.
\end{equation}

We first consider the case $\alpha = 2N$ and then we indicate how to
proceed in the (simpler) case $ 2N < \alpha < 2N+2\,.$ Our plan is
to integrate in $x$ in \eqref{eq20}, interchange the integration in
$x$ and $t$ and then apply Plancherel in $x.$ Before we remark that
by making the change of variables $y= x+t h$ we transform integrals
on $B(x,t)$ in integrals on $B(0,1)$ and we get
\begin{equation*}
\begin{split}
 \fint_{B(x,\,t)} \rho_N(y,x) \,dy &=  \fint_{B(0,\,1)}
f(x+th)\,dh - \sum_{j=0}^{N-1} \frac{\Delta^j f
(x)}{L_j}\,t^{2j}\,\fint_{B(0,\,1)}|h|^{2j}\,dh\\*[5pt] &\quad -
\fint_{B(0,\,1)} \Delta^N
f(x+th)\,dh\;t^{2N}\,\fint_{B(0,\,1)}|h|^{2N}\,dh\,.
\end{split}
\end{equation*}
Now apply Plancherel in $x$,\, as explained before, and make the
change of variables $\tau = t \,|\xi|,$ where $\xi$ is the variable
in the frequency side. We obtain
\begin{equation}\label{eq21b}
\|S_\alpha(f)\|_2^2 = c \,I\,\|(-\Delta)^{\alpha/2} f\|_2^2\,,
\end{equation}
where
$$
I=\int_0^\infty \left| F(\tau)- \sum_{j=0}^{N-1} (-1)^{j} \tau^{2j}
\frac{M_j}{L_j}- (-1)^{N} \tau^{2N}
\frac{M_N}{L_N}\,F(\tau)\right|^2 \,
\frac{d\tau}{\tau^{2\alpha+1}}\,.
$$
Here $F$ is a function defined on $[0,\infty)$ such that $F(|\xi|)$
gives the Fourier transform of the radial function
$\frac{1}{|B(0,1)|} \chi_{B(0,1)}$ at the point $\xi$, and we have
introduced the notation $M_j = \fint_{B(0,1} |h|^{2j}\,dh.$ We have
to show that the integral $I$ is finite.

Using the series expansion of the exponential we see that, as $\tau
\rightarrow 0$,
\begin{equation*}
\begin{split}
F(\tau) & = \fint_{B(0,1)} \exp{(i h_1 \tau)}\,dh \\*[5pt] &
=1+\dotsb+(-1)^N\,\tau^{2N} \frac{1}{(2N) !}\,\fint_{B(0,1)}
h_1^{2N}\,dh +O(\tau^{2N+2})\,.
\end{split}
\end{equation*}
We need to compare $\fint_{B(0,1)} h_1^{2N}\,dh$ with
$\fint_{B(0,1)} |h|^{2N}\,dh .$  The linear functionals $P
\rightarrow \Delta^{2j}(P)$ and $P \rightarrow \fint_{B(0,1)} P ,$
defined on the space $H_{2j}$ of homogeneous polynomials of
degree~$2j$, have the same kernel. This follows from the discussion
before the statement of Theorem~\ref{T3} in the introduction.
Therefore, for some constant $c,$
$$
\Delta^{2j}(P) = c \,\fint_{B(0,1)} P , \quad P \in H_{2j}\,.
$$
Taking $P(x)=|x\,|^{2j}$ we get $L_j = c\, \fint_{B(0,1)} |x|^{2j} =
\,dx ,$ and taking $P(x)=x_1^{2j}$ we get \\ $ (2j) ! = c\,
\fint_{B(0,1)} x_1^{2j}\,dx .$ Hence
$$
\frac{1}{(2j)!}\,\fint_{B(0,1)} x_1^{2j}\,dx =
\frac{1}{L_j}\,\fint_{B(0,1)} |x\,|^{2j}\,dx = \frac{M_j}{L_j}\,,
$$
and thus, owing to the definition of $I$ and the fact that
$F(\tau)=1+O(\tau^2),$ as $\tau \rightarrow 0,$
$$
I = \int_0^\infty
O(\tau^{2(2N+2)})\,\frac{d\tau}{\tau^{2\alpha+1}},\quad
{\rm{as}}\;\tau \rightarrow 0\,.
$$
Then $I$ is convergent at $0$ because $\alpha < 2N+2$ (indeed, now
$\alpha = 2N\,,$ but this part of the argument works for the full
range $2N \le \alpha < 2N+2\,.$)

We turn to the case $\tau \rightarrow \infty.$ Notice that the only
difficulty is the last term in the integrand of $I$, because
$$
\int_1^\infty {\tau^{4j}} \frac{d\tau }{\tau^{2\alpha+1}}\,< \infty,
\quad 0 \leq j \leq N-1\,,
$$ provided  $2N \le \alpha.$
To deal with the term
\begin{equation}\label{eq22}
\int_1^\infty |\tau^{2 N}\,F(\tau)|^2
\,\frac{d\tau}{\tau^{2\alpha+1}}
\end{equation}
we only need to recall that $F$ can be expressed in terms of Bessel
functions. Concretely, one has (\cite[Appendix B.5, p. 429]{Gr})
$$
|B(0,1)|\; F(\tau) = \frac{J_{n/2}(\tau)}{|\tau|^{n/2}}\,.
$$
The asymptotic behaviour of $J_{n/2}(\tau)$ gives the inequality, as
$\tau \rightarrow \infty,$
$$
|F(\tau)| \le C\, \frac{1}{\tau^{\frac{n+1}{2}}}\le C\,
\frac{1}{\tau}\,,
$$
which shows that the integral \eqref{eq22} is finite provided
$2N-1 < \alpha,$ which is the case because $\alpha=2N\,.$ Observe
that the argument works for $2N < \alpha < 2N+2$ provided
$\rho_N(y,x)$ is defined replacing  $\Delta^{N}f(x)$ by
$(\Delta^{N} f)_{B(x,\,t)}$.

In the case $2N < \alpha < 2N+2$ we argue similarly. After applying
Plancherel we obtain \eqref{eq21b} where now $I$ is
$$
I=\int_0^\infty \left| F(\tau)- \sum_{j=0}^{N} (-1)^{j} \tau^{2j}
\frac{M_j}{L_j}\right|^2 \, \frac{d\tau}{\tau^{2\alpha+1}}\,.
$$
The proof that $I$ is finite  as $\tau \rightarrow 0$ is exactly as
before. As $\tau \rightarrow \infty$ the situation now is simpler
because the worst term in $I$ is the last one, namely,
$$
\int_1^\infty {\tau^{4N}} \frac{d\tau }{\tau^{2\alpha+1}}\,,
$$
which is finite because $2N < \alpha .$

\subsection{A vector valued operator and its kernel}

Given $f \in W^{\alpha,p},$  there exists a function $g \in L^p$
such that $f = I_\alpha \ast g.$ Indeed, $g =
(-\Delta)^{\alpha/2}(f) .$ Then
\begin{equation*}\label{eq}
\fint_{B(x,\,t)} \rho_N(y,x)\,dy = (K_t \ast g)(x)\,,
\end{equation*}
where the kernel $K_t(x)$ is
\begin{equation}\label{eq23}
K_t(x) = \fint_{B(x,\,t)} \left(I_\alpha(y)-\sum_{j=0}^{N-1}
\frac{1}{L_j}\,\Delta^jI_\alpha(x)\,|y-x|^{2j}- \frac{1}{L_{N}}
(\Delta^{N} I_\alpha)_{B(x,\,t)}\,|y-x|^{2N}\right)\, dy
\end{equation}
if $\alpha = 2N\,,$ and
\begin{equation}\label{eq23b}
K_t(x) = \fint_{B(x,\,t)} \left(I_\alpha(y)-\sum_{j=0}^{N}
\frac{1}{L_j}\,\Delta^jI_\alpha(x)\,|y-x|^{2j}\right)\, dy
\end{equation}
if $ 2N < \alpha < 2N+2 \,.$

Hence the square function associated with the smoothness index
$\alpha$ is
\begin{equation*}\label{eq}
S_\alpha(f)^2(x) = \int_0^\infty \left|(K_t \ast g)(x)\right|^2
\,\frac{dt}{t^{2\alpha+1}}, \quad x \in \Rn \,.
\end{equation*}
Define an operator $T$ acting on functions $f \in L^2(\Rn)$ by
$$
Tg(x) = (K_t \ast g)(x), \quad x \in \Rn\,.
$$
The identity \eqref{eq21} in subsection 4.2 says that $T$ takes
values in $L^2(\Rn, L^2(dt/t^{2\alpha+1}))$ and, more precisely,
that
$$
\int_{\Rn} \|Tg(x) \|_{L^2(dt/t^{2\alpha+1})}^2 \,dx =
\|S_\alpha(f)\|_2^2 = c\,\|g\|_2^2 \,.
$$
Therefore $T$ is an operator mapping isometrically (modulo a
multiplicative constant) $L^2(\Rn)$ into $L^2(\Rn,
L^2(dt/t^{2\alpha+1}))$ and we have an explicit expression for its
kernel. If we can prove that $K_t(x)$ satisfies H\"{o}rmander's
condition
$$
\int_{|x|\geq 2 |y|} \|K_t(x-y)-K_t(x)\|_{L^2(dt/t^{2\alpha+1})}
\,dx\le C,\quad y \in \Rn\,,
$$
then the proof is finished by appealing to a well known result of
Benedek, Calder\'{o}n and Panzone (\cite[Theorem 3.4, p.~492]{GR}; see
also \cite[p.~507]{GR}). In fact, we will show the following
stronger version of H\"{o}rmander's condition
\begin{equation}\label{eq24}
\|K_t(x-y)-K_t(x)\|_{L^2(dt/t^{2\alpha+1})} \le
C\,\frac{|y|^\gamma}{|x|^{n+\gamma}},\quad  |x|\geq 2|y|\,,
\end{equation}
for some $\gamma > 0$ depending on $\alpha$ and $n.$

The proof of \eqref{eq24} is lengthy. In the next subsection we will
consider the case of small ``increments" in $t$, namely $t < |x|/3
.$

\subsection{H\"{o}rmander's condition: \boldmath$t < |x\,|/3 $}

We distinguish two cases: $2N < \alpha < 2N+2$ and $\alpha = 2N$.
Assume first that $2N < \alpha < 2N+2\,.$ Set
$$\chi(x)= \frac{1}{|B(0,1)|}\, \chi_{B(0,1)}(x)$$
and
$$
\chi_t(x)= \frac{1}{t^n} \chi(\frac{x}{t})\,.
$$
To compute the gradient of $K_t$ we remark that
$$
K_t(x)= (I_{\alpha} \ast \chi_t)(x)-\sum_{j=0}^{N} \frac{M_j}{L_j}\,
t^{2j}\,\Delta^jI_{\alpha}(x)\,,
$$
where $M_j = \fint_{B(0,1)} |z|^{2j}\,dz.$ Thus
$$
\nabla K_t(x) = \fint_{B(x,\,t)} \left(\nabla
I_{\alpha}(y)-\sum_{j=0}^{N} \frac{1}{L_j}\,\Delta^j (\nabla
I_{\alpha})(x)\,|y-x|^{2j}\right)\,dy.
$$
Let $P_m(F,x)$ stand for the Taylor polynomial of degree $m$ of the
function $F$ around the point $x.$ Therefore
$$
\nabla K_t (x) = \fint_{B(x,\,t)} \left(\nabla
I_{\alpha}(y)-P_{2N+1}(\nabla I_{\alpha},x)(y)\right)\,dy\,,
$$
because the terms which have been added have zero integral on the
ball $B(x,t)$, either because they are Taylor homogeneous
polynomials of $\nabla I_{\alpha}$ of odd degree or because they are
the ``zero integral part" of a Taylor homogeneous polynomial of
$\nabla I_{\alpha}$ of even degree (see the discussion before the
statement of Theorem $3$ in the introduction). Given $x$ and~$y$
such that $|x|\geq 2 |y|$, apply the formula above to estimate
$\nabla K_t (z)$ for $z$ in the segment from~$x-y$ to~$y.$ The
standard estimate for the Taylor remainder gives
$$
|\nabla K_t (z)| \leq t^{2N+2}\, \sup_{w \in B(z,\,t)} |\nabla
^{2N+3} I_{\alpha}(w)| \,.
$$
Notice that if $z \in [x-y,y]$, $w \in B(z,t)$ and $t \leq |x|/3,$
then $|w| \geq |x|/6 .$ Now, one has to observe that
$$
|\nabla ^{2N+3} I_{\alpha}(w)| \leq C\, |w|^{\alpha-n-2N-3}\,,
$$
owing to the fact that logarithmic factors do not appear because the
exponent~$\alpha-n-2N-3 < -n-1$ is negative. By the mean value
Theorem we then get
$$
|K_t(x-y)-K_t (x)| \leq |y|\,\sup_{z \in [x-y,y]} \,|\nabla K_t (z)|
\leq C \,|y|\,t^{2N+2}\,|x|^{\alpha-n-2N-3}\,.
$$
Since
$$
\left(\int_0^{|x|/3}
t^{2(2N+2)}\,\frac{dt}{t^{2\alpha+1}}\right)^{1/2} = C \,
|x|^{2N+2-\alpha}\,,
$$
we obtain
\begin{equation*}
\left(\int_0^{|x|/3}|K_t (x-y)-K_t(x)|^2
\,\frac{dt}{t^{2\alpha+1}}\right)^{1/2} \leq
C\,\frac{|y|}{|x|^{n+1}}\,,
\end{equation*}
which is  the stronger form of H\"{o}rmander's condition \eqref{eq24}
with $\gamma\!=\!1$ in the domain~$ t\! < \!|x|/3$.\!


Let us consider now the case $\alpha=2N.$ Since $\Delta^N I_{2N}$ is
the Dirac delta at $0,$ $\Delta^N I_{2N}(x)= 0.$ Hence $K_t(x) =
K_t^{(1)}(x) - K_t^{(2)}(x),$ where $K_t^{(1)}$ is given by
\eqref{eq23b} with $\alpha$ replaced by $2N$ and
$$
K_t^{(2)}(x) = \fint_{B(x,\,t)}  \frac{1}{L_{N}} (\,(\Delta^{N}
I_{2N})_{B(x,\,t)} \,|y-x|^{2N}\,dy =
\frac{M_N}{L_N}\,t^{2N}\,(\Delta^{N} I_{2N})_{B(x,\,t)}\,.
$$
The kernel $K_t^{(1)}$ is estimated exactly as in the first case by
just setting $\alpha = 2N.$ The kernel $K_t^{(2)}$ requires a
different argument.

Since $\Delta^N I_{2N} $ is the Dirac delta at the origin,
 $K_t^{(2)}$ is a constant multiple of $t^{2N}\, \chi_t.$  We show now that this kernel
 satisfies the strong form of H\"{o}rmander's condition. The quantity
$|\chi_t(x-y)-\chi_t(x)|$ is non-zero only if $|x-y|< t< |x|$ or
$|x|< t< |x-y|,$ in which cases takes the value $1/c_n \,t^n$, $c_n
= |B(0,1)|.$ On the other hand, if $|x|\geq 2 |y|\,$ then each $z$
in the segment joining $x$ and $x-y$ satisfies $|z|\geq |x|/2.$
Assume that $|x-y|<  |x|$ (the case~$|x|< |x-y|$ is similar). Then
\begin{equation*}
\begin{split}
\left(\int_{0}^\infty (t^{2N} \,(\chi_t(x-y)-\chi_t(x)))^2
\,\frac{dt}{t^{4N+1}} \right)^{\frac{1}{2}} & =
C\,\left(\int_{|x-y|}^{|x|}
\frac{dt}{t^{2n+1}}\right)^{\frac{1}{2}}\\*[5pt] & = C\,\left(
\frac{1}{|x-y|^{2n}}-\frac{1}{|x|^{2n}}\right)^{\frac{1}{2}} \le C\,
\frac{|y|^{1/2}}{|x|^{n+1/2}}\,,
\end{split}
\end{equation*}
which is \eqref{eq24} with $\gamma = 1/2.$

\subsection{H\"{o}rmander's condition: \boldmath$t \geq |x\,|/3 $}
We distinguish three cases: $\alpha < n+1$, $\alpha = n+1$ and
$\alpha > n+1$.

If $\alpha < n+1$, then all terms in the expressions \eqref{eq23}
and  \eqref{eq23b} defining $K_t$ satisfy H\"{o}rmander's condition in
the domain $t\! \geq \!|x\,|/3 .$  If $\alpha =2N$ the last term in
\eqref{eq23} is of the form
$$
- \fint_{B(x,\,t)} \frac{1}{L_{N}} (\Delta^{N}
I_\alpha)_{B(x,\,t)}\,|y-x|^{2N}\, dy =
-\frac{M_N}{L_N}\,t^{2N}\,(\Delta^{N} I_{\alpha})_{B(x,\,t)} = C\,
t^{2N}\,\chi_t(x)\,,
$$
which has been dealt with in the previous subsection. Let us
consider the terms of the form ~$t^{2j}\,\Delta^j I_\alpha(x)$, $j
\geq 0.$ One has the gradient estimate
\begin{equation}\label{eq27}
|t^{2j}\,\nabla \Delta^j I_\alpha(x)| \le C\, t^{2j}\,|x|^{\alpha-n
-2j-1}\,,
\end{equation}
because no logarithmic factors appear, the reason being that the
exponent $\alpha-n -2j-1 \leq \alpha -(n+1)$ is negative. Since
$$
\left(\int_{|x|/3}^\infty
t^{2(2j)}\frac{dt}{t^{2\alpha+1}}\right)^{1/2}=
C\,|x|^{2j-\alpha}\,,
$$
we get H\"{o}rmander's condition with $\gamma=1$ in the domain $t \geq
|x\,|/3 .$

It remains to take care of the first term $\fint_{B(x,\,t)}
I_\alpha(y)\,dy$ in \eqref{eq23}. We have the following obvious
estimate for its gradient
$$ \left|\fint_{B(x,\,t)} \nabla
I_\alpha(y)\,dy \right| \le C\, \fint_{B(x,\,t)}
|y|^{\alpha-n-1}\,dy\,.
$$
Notice that there are no logarithmic factors precisely because
$\alpha < n+1.$ The integrand in the last integral is locally
integrable if and only if $\alpha > 1.$ Assume for the moment that
$1 < \alpha <n+1.$ Then
$$
\left| \fint_{B(x,\,t)} \nabla I_\alpha(y)\,dy \right| \le C\,
t^{-n+\alpha-1}\,.
$$
Since
$$
\left(\int_{|x|/3}^\infty
t^{2(\alpha-n-1)}\frac{dt}{t^{2\alpha+1}}\right)^{1/2}=
C\,|x|^{-n-1}\,,
$$
we get H\"{o}rmander's condition with $\gamma=1$ in the domain $t \geq
|x\,|/3 .$ The case $\alpha=1$ has been treated in section 1, so we
can assume that $0 < \alpha < 1 .$ We need the following well known
and easily proved inequality
\begin{CZ}
Let $E$ be a measurable subset of $\Rn$ and $0 < \beta < n .$ Then
$$
\int_E \frac{dz}{|z|^{n-\beta}} \le C\,|E|^{\beta/n}\,,
$$
where $|E|$ is the Lebesgue measure of $E$.
\end{CZ}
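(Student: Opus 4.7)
The plan is a standard radial symmetrization argument. Since the integrand $|z|^{\beta-n}$ is a radially decreasing function of $|z|$, among all measurable sets of prescribed Lebesgue measure the quantity $\int_E|z|^{\beta-n}\,dz$ is maximized when $E$ is replaced by the centered ball of the same measure. Once this reduction is in hand, the bound follows from a direct computation in polar coordinates.

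Concretely, I would let $r>0$ satisfy $|B(0,r)|=|E|$, namely $r=(|E|/\omega_n)^{1/n}$ with $\omega_n:=|B(0,1)|$, and set $B:=B(0,r)$. Writing $E=(E\cap B)\cup(E\setminus B)$ and $B=(E\cap B)\cup(B\setminus E)$, the common piece $E\cap B$ cancels in the comparison, so it suffices to show
\[
\int_{E\setminus B}|z|^{\beta-n}\,dz\le\int_{B\setminus E}|z|^{\beta-n}\,dz.
\]
Here $|E\setminus B|=|B\setminus E|$ because $|E|=|B|$, while $|z|^{\beta-n}\le r^{\beta-n}$ on $E\setminus B$ (where $|z|\ge r$) and $|z|^{\beta-n}\ge r^{\beta-n}$ on $B\setminus E$ (where $|z|\le r$). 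Bounding the left-hand side above by $r^{\beta-n}|E\setminus B|$ and the right-hand side below by $r^{\beta-n}|B\setminus E|$ gives the desired comparison.

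Having reduced to a centered ball, I would finish the proof by computing in polar coordinates
\[
\int_B\frac{dz}{|z|^{n-\beta}}=n\omega_n\int_0^r s^{\beta-1}\,ds=\frac{n\omega_n}{\beta}\,r^\beta=\frac{n\,\omega_n^{(n-\beta)/n}}{\beta}\,|E|^{\beta/n},
\]
which is the claim with an explicit constant. There is no real obstacle; the two-sided hypothesis $0<\beta<n$ enters naturally, since $\beta>0$ is precisely what ensures convergence of the radial integral at the origin, and $\beta<n$ is what makes $|z|^{\beta-n}$ locally integrable so that $\int_E|z|^{\beta-n}\,dz$ is meaningful for arbitrary bounded measurable sets.
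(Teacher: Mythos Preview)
Your argument is correct: the symmetric decreasing rearrangement (bathtub) comparison reduces the problem to the centered ball of the same measure, and the polar-coordinates computation then gives the bound with the explicit constant you wrote. The paper itself offers no proof of this lemma; it simply labels it ``well known and easy to prove,'' so there is nothing substantive to compare against, and your approach is exactly the standard one.

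One minor slip in your closing commentary (not in the proof itself): you attribute local integrability of $|z|^{\beta-n}$ to the hypothesis $\beta<n$, but in fact local integrability near the origin is equivalent to $n-\beta<n$, i.e.\ $\beta>0$, which is the same condition that makes $\int_0^r s^{\beta-1}\,ds$ converge. The genuine role of $\beta<n$ in your argument is that it makes $|z|^{\beta-n}$ radially \emph{decreasing}, which is precisely what you use in the comparison $\int_{E\setminus B}\le r^{\beta-n}|E\setminus B|=r^{\beta-n}|B\setminus E|\le\int_{B\setminus E}$.
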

Denoting by $D$ the symmetric difference between $B(x,t)$ and
$B(x-y,t)$, we obtain, by the Lemma,
\begin{equation*}
\begin{split}
|\fint_{B(x-y,\,t)} I_\alpha(y)\,dy - \fint_{B(x,\,t)}
I_\alpha(y)\,dy| & \le C\,t^{-n}\,\int_D
\frac{dy}{|y|^{n-\alpha}}\\*[5pt] & \le C\,
t^{-n}\,(t^{n-1}\,|y|)^{\alpha/n} =
C\,t^{\alpha-n-\alpha/n}\,|y|^{\alpha/n}\,.
\end{split}
\end{equation*}
Since
$$
\left(\int_{|x|/3}^\infty
t^{2(\alpha-n-\alpha/n)}\frac{dt}{t^{2\alpha+1}}\right)^{1/2}=
C\,|x|^{-n-\alpha/n}\,,
$$
we get H\"{o}rmander's condition with $\gamma=\alpha/n$ in the domain $t
\geq |x\,|/3 .$

We tackle now the case $\alpha=n+1 .$ Since $\alpha$ and $n$ are
integers with different parity no logarithmic factor will appear in
$I_\alpha.$ Thus $I_\alpha(x)= C\, |x\,|.$  The proof above shows
that the terms $t^{2j}\,\Delta^j I_\alpha(x)$ appearing in the
expression \eqref{eq23} of the kernel $K_t$ still satisfy
H\"{o}rmander's condition for $j \geq 1$. The remaining term is
$$
\fint_{B(x,\,t)} \left(I_\alpha(y)-I_\alpha(x)\right)\,dy
$$
and its gradient is estimated by remarking that the function $|x\,|$
satisfies a Lipschitz condition. We obtain
$$
\left| \fint_{B(x,\,t)} \left(\nabla I_\alpha(y)- \nabla
I_\alpha(x)\right)\,dy\, \right| \le C\,.
$$
But clearly
$$
\left(\int_{|x|/3}^\infty \frac{dt}{t^{2\alpha+1}}\right)^{1/2}=
C\,|x|^{-\alpha} = C\,|x|^{-n-1}\,,
$$
which completes the argument.

We turn our attention to the case $\alpha > n+1.$  If $\alpha=2N$
the part of $K_t$ which has to be estimated is
$$
H_t(x) = \fint_{B(x,\,t)} \left(I_\alpha(y)-\sum_{j=0}^{N-1}
\frac{1}{L_j}\,\Delta^jI_\alpha(x)\,|y-x|^{2j}\right)\,dy\,.
$$
Since $\Delta^{2N}I_\alpha$ is the Dirac delta at the origin and $x
\neq 0$ we have $\Delta^{2N}I_\alpha (x) =0$ and so $H_t(x)$
coincides with the expression \eqref{eq23b} for $K_t(x)$ in the case
$2N < \alpha  < 2N+2 \,.$  We are then going to deal with this
kernel in the full range $2N \leq \alpha < 2N+2$. Let $M$ be the
unique positive integer $M$ such that $ -1< \alpha-n-2M \le 1 .$ We
split $K_t$ into two terms according to $M$, that is, $K_t =
K_t^{(1)}-K_t^{(2)},$ where
$$
K_t^{(1)}(x)= \fint_{B(x,\,t)} \left(I_\alpha(y)-\sum_{j=0}^{M-1}
\frac{1}{L_j}\,\Delta^jI_\alpha(x)\,|y-x|^{2j}\right)\,dy
$$
and
$$
K_t^{(2)}(x)= \fint_{B(x,\,t)} \left(\sum_{j=M}^{N}
\frac{1}{L_j}\,\Delta^jI_\alpha(x)\,|y-x|^{2j}\right)\,dy\,.
$$
The estimate of each of the terms in $K_t^{(2)}$ is performed as we
did for the case $\alpha < n+1$ (if $\alpha  = 2N$, then $j \le N-1
$).  The gradient estimate is exactly \eqref{eq27}. Now no
logarithmic factors appear because the exponent satisfies $\alpha-n
-2j-1 \leq \alpha -n -2M-1 \le 0$. The rest is as before.

To estimate $K_t^{(1)}$ we distinguish three cases: $-1< \alpha -n
-2M < 0$, \,$0 < \alpha -n -2M \le 1$ and $\alpha -n -2M =0.$ In
the first case we write the gradient of $K_t^{(1)}$ as
\begin{equation*}
\begin{split}
\nabla K_t^{(1)}(x) & = \fint_{B(x,\,t)} \left(\nabla
I_{\alpha}(y)-\sum_{j=0}^{M-1} \frac{1}{L_j}\,\Delta^j (\nabla
I_{\alpha})(x)\,|y-x|^{2j}\right)\,dy \\*[5pt] & = \fint_{B(x,\,t)}
\left(\nabla I_{\alpha}(y)- P_{2M-2}(\nabla
I_\alpha,x)(y)\,\right)\,dy \,,
\end{split}
\end{equation*}
where $P_{2M-2}$ is the Taylor polynomial of degree $2M-2$ of
$\nabla I_\alpha$ around the point $x.$ As before, the added terms
have zero integral on $B(x,t)$ either because they are homogeneous
Taylor polynomials of odd degree or the ``zero integral part" of
homogeneous Taylor polynomials of even degree. Now fix $y$ in
$B(x,t)$ but not in the half line issuing from $x$ and passing
through the origin. Define a function $g$ on the interval $[0,1]$ by
$$
g(\tau)= \nabla I_{\alpha}(x+\tau(y-x))- P_{2M-2}(\nabla
I_\alpha,x)(x+\tau(y-x)),\quad 0 \leq \tau \leq 1\,.
$$
Then $g \in C^\infty[0,1]$ because the segment with endpoints $x$
and $y$ omits the origin. Since $g^{j)}(0)=0,$ $0 \leq j \leq 2M-2
,$
\begin{equation*}
\begin{split}
\nabla I_{\alpha}(y)- P_{2M-2}(\nabla I_\alpha,x)(y) & = g(1)-
\sum_{j=0}^{2M-2}\frac{g^{j)}(0)}{j!}\\*[5pt] & = \int_0^1
\frac{(1-\tau)^{2M-2}}{(2M-2)!} \,g^{2M-1)}(\tau)\,d\tau \,,
\end{split}
\end{equation*}
by the integral form of Taylor's remainder. The obvious estimate for
the derivative of $g$ of order $2M-1$ is
$$
|g^{2M-1)}(\tau)| \le |\nabla^{2M-1} \nabla I_\alpha
(x+\tau(y-x))||y-x|^{2M-1} \le C\,
\frac{t^{2M-1}}{|x+\tau(y-x)|^{n-(\alpha-2M)}} \,.
$$
Since we are in the first case, $\alpha$ is not integer and thus no
logarithmic factor exists. Moreover $0 < n-(\alpha-2M)< 1,$ which
implies that and that $1/|z\,|^{ n-(\alpha-2M)}$ is locally
integrable in any dimension. Therefore
\begin{equation*}
\begin{split}
|\nabla K_t^{(1)}(x)| & \le C\, \int_0^1 \left(
t^{2M-1-n}\,\int_{B(x,t)}
\frac{dy}{|x+\tau(y-x)|^{n-(\alpha-2M)}}\right)\,d\tau\,,\\*[5pt] &
=C\, t^{2M-1-n} \int_0^1 \left(\int_{B(x,t\,\tau\,)}
\frac{dz}{|z\,|^{n-(\alpha-2M)}}\right) \frac{d\tau}{\tau^n}\\*[5pt]
&\le C\,t^{2M-1-n} \int_0^1
(t\,\tau)^{\alpha-2M}\,\frac{d\tau}{\tau^n}\\*[5pt] & =
t^{\alpha-n-1}\,\int_0^1 \frac{d\tau}{\tau^{n-(\alpha-2M)}} =
C\,t^{\alpha-n-1} \,.
\end{split}
\end{equation*}
Since
$$
\left(\int_{|x|/3}^\infty
t^{2(\alpha-n-1)}\frac{dt}{t^{2\alpha+1}}\right)^{1/2}=
C\,|x|^{-n-1}\,,
$$
we get H\"{o}rmander's condition with $\gamma=1$ in the domain $t \geq
|x\,|/3 .$

Let us consider the second case: $0 < \alpha -n -2M \le 1.$ This
time we express the gradient of $K_t^{(1)}$ by means of a Taylor
polynomial of degree $2M-1$:
\begin{equation*}
\begin{split}
\nabla K_t^{(1)}(x) & = \fint_{B(x,\,t)} \left(\nabla
I_{\alpha}(y)-\sum_{j=0}^{M-1} \frac{1}{L_j}\,\Delta^j (\nabla
I_{\alpha})(x)\,|y-x|^{2j}\right)\,dy \\*[5pt] & = \fint_{B(x,\,t)}
\left(\nabla I_{\alpha}(y)- P_{2M-1}(\nabla
I_\alpha,x)(y)\,\right)\,dy \,.
\end{split}
\end{equation*}
Using again the integral form of the Taylor remainder of the
function $g$, with $P_{2M-2}$ replaced by $P_{2M-1},$ we obtain
\begin{equation*}
\begin{split}
|\nabla K_t^{(1)}(x)| & \le C\, \int_0^1 \left(
t^{2M-n}\,\int_{B(x,t)}
\frac{dy}{|x+\tau(y-x)|^{n-(\alpha-2M-1)}}\right)\,d\tau\,,\\*[5pt]
& =C\, t^{2M-n} \int_0^1 \left(\int_{B(x,t\,\tau\,)}
\frac{dz}{|z\,|^{n-(\alpha-2M-1)}}\right)
\frac{d\tau}{\tau^n}\\*[5pt] &\le C\,t^{2M-n} \int_0^1
(t\,\tau)^{\alpha-2M-1}\,\frac{d\tau}{\tau^n}\\*[5pt] & =
t^{\alpha-n-1}\,\int_0^1 \frac{d\tau}{\tau^{n-(\alpha-2M-1)}} =
C\,t^{\alpha-n-1} \,,
\end{split}
\end{equation*}
from which we get the desired estimate as before.

We turn now to the last case left, $\alpha=n+2M,$ with $M$ a
positive integer. In this case
$$
I_\alpha(x) = C\,|x|^{2M} \,(A+B\log|x\,|),\quad x \in \Rn,\quad B
\neq 0\,,
$$
where $A$, $B$ and $C$ are constants depending on $n$ and $M.$ We
also have
$$
\Delta^{M-1}I_\alpha(x) = C\,|x|^{2} \,(A_1+B_1 \log|x\,|),\quad x
\in \Rn
$$
and
$$
\nabla \Delta^{M-1}I_\alpha(x) = C\,x \;(A_2+B_2 \log|x\,|),\quad x
\in \Rn \,.
$$
In particular $\nabla \Delta^{M-1}I_\alpha$ is in the Zygmund class
on $\R^n.$ We have
\begin{equation*}
\begin{split}
\nabla K_t^{(1)}(x) & = \fint_{B(x,\,t)} \!\left(\nabla
I_{\alpha}(y)-\!\sum_{j=0}^{M-2} \frac{1}{L_j}\,\Delta^j (\nabla
I_{\alpha})(x)\,|y\!-\!x|^{2j} - \frac{1}{L_{M-1}}\,\Delta^{M-1}
(\nabla I_{\alpha})(x)\,|y\!-\!x|^{2M-2}\right)\,dy \\*[5pt] & =
\fint_{B(x,\,t)}\! \left(\nabla I_{\alpha}(y)- P_{2M-3}(\nabla
I_\alpha,x)(y) - \frac{1}{L_{M-1}}\,\Delta^{M-1} (\nabla
I_{\alpha})(x)\,|y-x|^{2M-2}\,\right) \,dy \,.
\end{split}
\end{equation*}
Introduce the function $g$ as above, with $P_{2M-2}$ replaced by
$P_{2M-3},$ so that
\begin{equation*}
\begin{split}
\nabla I_{\alpha}(y)- P_{2M-3}(\nabla I_\alpha,x)(y) & = g(1)-
\sum_{j=0}^{2M-3}\frac{g^{j)}(0)}{j!}\\*[5pt] & = \int_0^1
 (2M-2) (1-\tau)^{2M-3} \, \frac{g^{2M-2)}(\tau)}{(2M-2)!}\,d\tau
 \,.
\end{split}
\end{equation*}
Now
\begin{equation*}
\begin{split}
\frac{g^{2M-2)}(\tau)}{(2M-2)!} & = \sum_{|\beta|=2M-2}
\left(\frac{\partial^\beta \nabla I_\alpha (x+ \tau(y-x))}{\beta
!}\right) \,(y-x)^\beta \\*[5pt] & = \sum_{|\beta|=2M-2}
\left(\partial^\beta \nabla I_\alpha (x+ \tau(y-x)) -
\partial^\beta \nabla I_\alpha (x)
 \right)\, \frac{(y-x)^\beta}{\beta !}\\*[5pt]
 &\quad+ \sum_{|\beta|=2M-2} \left(\frac{\partial^\beta \nabla I_\alpha
(x)}{\beta !}\right) \,(y-x)^\beta\,.
\end{split}
\end{equation*}
The last term in the preceding equation is the homogeneous Taylor
polynomial of degree~$2M-2$ of the vector $\nabla I_\alpha$ around
the point $x.$ It is then equal to a homogeneous polynomial of the
same degree with zero integral on $B(x,t)$ plus
$\frac{1}{L_{M-1}}\,\Delta^{M-1} (\nabla
I_{\alpha})(x)\,|y-x|^{2M-2}$ (by the discussion before the
statement of Theorem 3 in the introduction). Hence
$$
\int_{B(x,t\,)} \left(\sum_{|\beta|=2M-2} \left(\frac{\partial^\beta
\nabla I_\alpha (x)}{\beta !}\right) \,(y-x)^\beta -
\frac{1}{L_{M-1}}\,\Delta^{M-1} (\nabla
I_{\alpha})(x)\,|y-x|^{2M-2}\right)\,dy =0 \,,
$$
and therefore, remarking that $\int_0^1 (2M-2) (1-\tau)^{2M-3}
\,d\tau =1,$
\begin{multline*}
\nabla K_t^{(1)}(x) \\
=\! \fint_{B(x,\,t)}\int_0^1 \!(2M-2) (1-\tau)^{2M-3} \left(
\sum_{|\beta|=2M-2} \left(\partial^\beta \nabla I_\alpha (x\!+\!
\tau(y\!-\!x)) -
\partial^\beta \nabla I_\alpha (x)
 \right) \frac{(y-x)^\beta}{\beta !}  \right)\,d\tau \,dy\,.
\end{multline*}
Thus
\begin{equation*}
|\nabla K_t^{(1)}(x)|  \le  C\,\int_0^1 \sum_{|\beta|=2M-2} \left|
\fint_{B(x,\,t)}  \left(\partial^\beta \nabla I_\alpha (x+
\tau(y-x)) -
\partial^\beta \nabla I_\alpha (x)
 \right)\, \frac{(y-x)^\beta}{\beta !} \,dy   \right|\,d\tau\,.
\end{equation*}
Making the change of variables $h= \tau(y-x)$ the integral in $y$
above becomes
$$
J= \tau^{-|\beta|}\,\fint_{B(0,\,t\,\tau)} \left(\partial^\beta
\nabla I_\alpha (x+ h) -
\partial^\beta \nabla I_\alpha (x)
 \right) \frac{h^\beta}{\beta !}\,dh\,,
$$
which is invariant under the change of variables $h'= - h$, because
$|\beta|$ is even. Hence
$$
 2J= \tau^{-|\beta|}\,\fint_{B(0,\,t\,\tau)} \left(\partial^\beta
\nabla I_\alpha (x+ h)+\partial^\beta \nabla I_\alpha (x- h) - 2\,
\partial^\beta \nabla I_\alpha (x)
 \right) \frac{h^\beta}{\beta !}\,dh\,.
$$
Now we claim that $\partial^\beta \nabla I_\alpha$ is in the Zygmund
class for $|\beta|=2M-2$. This follows from the fact that the
Zygmund class in invariant under homogeneous smooth Calder\'{o}n
-Zygmund operators, $\Delta^{M-1}$ is an elliptic operator and
$\Delta^{M-1}\nabla I_\alpha$ is in the Zygmund class. Hence
$$
|J| \le C\, \tau^{-|\beta|}\,\fint_{B(0,\,t\,\tau)} |h|^{1+|\beta|}
\,dh  \le C\, t^{2M-1} \,\tau \,.
$$
Thus
$$
|\nabla K_t^{(1)}(x)| \le  C\,t^{2M-1}\,.
$$
Since
$$
\left(\int_{|x|/3}^\infty
t^{2(2M-1)}\frac{dt}{t^{2\alpha+1}}\right)^{1/2}= C\,|x|^{-n-1}\,,
$$
we get H\"{o}rmander's condition with $\gamma=1$ in the domain $t \geq
|x\,|/3 .$

\subsection{The sufficient condition}

In this section we prove that condition (2) in Theorem \ref{T3} is
sufficient for $f \in W^{\alpha,p}.$ Let $f, g_1,\dotsc,g_N \in L^p$
satisfy $S_\alpha(f,g_1,\dotsc,g_N) \in L^p. $ Take an infinitely
differentiable function~$\phi \geq 0$ with compact support in
$B(0,1),$ $\int \phi = 1$ and set $\phi_\epsilon(x)=
\frac{1}{\epsilon^n} \phi(\frac{x}{\epsilon}),\; \epsilon>0.$
Consider the regularized functions $f_\epsilon = f \ast
\phi_\epsilon$, $g_{j\,,\epsilon} = g_j \ast \phi_\epsilon$, $1 \le
j \le N. $ We want to show first that the infinitely differentiable
function $f_\epsilon$ is in $W^{\alpha,p}.$ We have
$(-\Delta)^{\alpha/2} f_\epsilon = f \ast (-\Delta)^{\alpha/2}
\phi_\epsilon.$ We need a lemma.

\begin{lemma}\label{laplacia}\quad
\begin{itemize}
\item[(i)] If $\varphi$ is a function in the Schwartz class and $\alpha$
any positive number, then $(-\Delta)^{\alpha/2} \varphi$ belongs to
all $L^q$ spaces , $ 1 \leq q \leq\ \infty $.
\item[(ii)] If $f \in L^p, 1 \le  p \le \infty\,,$ then $(-\Delta)^{\alpha/2}f$ is a tempered
distribution.
\end{itemize}
\end{lemma}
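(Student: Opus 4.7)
The plan for (i) is to reduce to the strict fractional range $0 < \beta < 2$ by writing $\alpha = 2N + \beta$ with $N$ a non-negative integer. If $\beta = 0$ then $(-\Delta)^{\alpha/2}\varphi = (-\Delta)^{N}\varphi$ is Schwartz, so trivially in every $L^q$. Otherwise I set $\eta = (-\Delta)^{N}\varphi$, which is still in the Schwartz class, and work with the pointwise symmetric singular integral representation
$$
(-\Delta)^{\beta/2}\eta(x) = c_{n,\beta}\int_{\Rn} \frac{2\eta(x) - \eta(x+y) - \eta(x-y)}{|y|^{n+\beta}}\,dy\,.
$$
For $\eta$ Schwartz this integral is absolutely convergent: near $y = 0$ the second difference is $O(|y|^{2})$ (integrable against $|y|^{-n-\beta}$ precisely because $\beta < 2$), and far from the origin $\eta$ is bounded while $|y|^{-n-\beta}$ is integrable. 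This already puts $(-\Delta)^{\beta/2}\eta$ in $L^\infty$.

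To upgrade to every $L^q$ with $q$ finite I would split the integral at $|y| = |x|/2$. In the inner region $|y|<|x|/2$ both $x\pm y$ remain outside $B(0,|x|/2)$, where Schwartz decay gives $|\nabla^{2}\eta(w)| \le C_M(1+|x|)^{-M}$ for every $M$; combined with $|2\eta(x)-\eta(x+y)-\eta(x-y)| \le C|y|^{2} \sup|\nabla^{2}\eta|$ this contributes $O_M\bigl((1+|x|)^{-M+2-\beta}\bigr)$, negligible for large $M$. In the outer region $|y|>|x|/2$ I estimate the three pieces of the second difference separately. The $\eta(x)$ term is direct. For $\eta(x\pm y)$ I change variables $z = x\pm y$ and further split $z$ into $|z|\geq|x|/4$ (Schwartz decay of $\eta$ yields any polynomial rate) and $|z|<|x|/4$ (then $|z-x|\gtrsim|x|$, so the kernel is $O(|x|^{-n-\beta})$ and one uses $\eta \in L^{1}$). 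Combining these estimates produces $|(-\Delta)^{\beta/2}\eta(x)| \leq C_\eta(1+|x|)^{-n-\beta}$; since $n+\beta>n$ this decay is in $L^q$ at infinity for every $q \geq 1$, completing (i).

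For (ii) I would use duality: for $\varphi$ in the Schwartz class set
$$
\langle (-\Delta)^{\alpha/2}f,\varphi\rangle := \int_{\Rn} f(x)\,(-\Delta)^{\alpha/2}\varphi(x)\,dx\,,
$$
which makes sense by part (i), since $(-\Delta)^{\alpha/2}\varphi \in L^{p'}$ with $1/p+1/p'=1$, and H\"older's inequality gives $|\langle (-\Delta)^{\alpha/2}f,\varphi\rangle| \leq \|f\|_p \,\|(-\Delta)^{\alpha/2}\varphi\|_{p'}$. Continuity on $\mathcal{S}(\Rn)$ follows from quantifying (i): the constant $C_\eta$ controlling $(1+|x|)^{n+\beta}|(-\Delta)^{\beta/2}\eta(x)|$ involves only $\|\eta\|_{1}$, $\|\eta\|_\infty$ and $\|\nabla^{2}\eta\|_\infty$, each of which is majorized by finitely many Schwartz seminorms of $\varphi$. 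The chief technical obstacle I expect is the outer sub-split, where $|y|\sim|x|$ and neither $x+y$ nor $x-y$ is automatically large; it is this case that forces the three-term plus $z$-region treatment and produces the sharp rate $|x|^{-n-\beta}$, which is precisely what is needed to cover the $L^{1}$ endpoint in (i).
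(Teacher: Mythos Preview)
Your argument for (i) is correct, but it follows a genuinely different route from the paper's. The paper never invokes the second-difference hypersingular representation of the fractional Laplacian. Instead it writes $\hat\psi(\xi)=|\xi|^\alpha\hat\varphi(\xi)=|\xi|^m\hat\varphi(\xi)\,|\xi|^{-(m-\alpha)}$ with $m-1<\alpha\le m$, and then represents $\psi$ as a convolution: $\psi=\Delta^M\varphi * I_{m-\alpha}$ when $m=2M$, and $\psi=-\imath\sum_j R_j(\partial_j\Delta^M\varphi)*I_{m-\alpha}$ when $m=2M+1$ (with the Riesz potential omitted in the integer cases). The decay $|\psi(x)|\le C|x|^{m-\alpha-n-1}$ then comes from the single observation that the Schwartz factor being convolved has zero integral, so one gains one extra power from the kernel. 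Your approach trades this potential-theoretic picture for a direct pointwise analysis of the symmetric difference integral; it is more self-contained (no Riesz transforms, no parity case split) and yields the uniform rate $(1+|x|)^{-n-\beta}$, whereas the paper's method ties the decay exponent to how far $\alpha$ is from the next integer. Both rates beat $|x|^{-n}$, which is all that is needed for the $L^1$ endpoint.

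For (ii) you and the paper do exactly the same thing: define $\langle(-\Delta)^{\alpha/2}f,\varphi\rangle=\langle f,(-\Delta)^{\alpha/2}\varphi\rangle$ and apply H\"older with part (i). Your extra sentence tracking the dependence of the constants on Schwartz seminorms of $\varphi$ is a welcome addition, since the paper's version stops at the H\"older bound and leaves continuity in the Schwartz topology implicit.
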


\begin{proof}
Set $\psi= (-\Delta)^{\alpha/2} \varphi$. If $\alpha =2m$ with $m$
a positive integer, then $\psi= (-\Delta)^{m} \varphi$ is in the
Schwartz  class and so the conclusion in $(i)$ follows. If $\alpha
=2m+1$, then
$$
\psi=(-\Delta)^{1/2}(-\Delta)^m \varphi = - i \sum_{j=1}^n
R_j(\,\partial_j\, (-\Delta)^m \varphi\,)\,,
$$
where $R_j$ are the Riesz transforms, that is, the Calder\'{o}n-Zygmund
operators whose\linebreak Fourier multiplier is $ \xi_j /|\xi|.$  It
is clear from the formula above that $\psi$ is infinitely
differentiable on~$\Rn$ and so the integrability issue is only at
$\infty.$ Since $\partial_j\, (-\Delta)^m \varphi\,),$ has zero
integral, one has, as $x \rightarrow \infty$, $|\psi(x)| \le C\,
|x|^{-n-1},$ and so the conclusion follows.

Assume now that $m-1 <  \alpha < m,$  for some positive integer $m.$
Thus
$$\hat{\psi}(\xi) = |\xi|^\alpha \hat{\varphi}(\xi) = |\xi|^m
\hat{\varphi}(\xi) \frac{1}{|\xi|^{m-\alpha}}\,.$$  If $m$ is even,
of the form $m = 2 M$ for some positive integer $M$, then
$$
\psi = \Delta^M \varphi \ast I_{m-\alpha}\,,
$$
where $I_{m-\alpha}(x)= C\, |x|^{m-\alpha -n}.$ Hence $\psi$ is
infinitely differentiable on $\Rn.$ Since $\Delta^M \varphi$ has
zero integral, $|\psi(x)| \le C\, |x|^{m-\alpha-n-1},$ as $x
\rightarrow \infty.$ But $\alpha-m +1
> 0$ and thus $\psi$ is in all $L^q$ spaces.

If $m$ is odd, of the form $m=2M+1$ for some non-negative integer
$M,$ then
$$
\psi = -i \sum_{j=1}^{n} R_j(\partial_j \Delta^M \varphi) \ast
I_{m-\alpha}\,.
$$
Again  $\psi$ is infinitely differentiable on $\Rn$ and, since
$R_j(\partial_j \Delta^M \varphi)$ has zero integral (just look at
the Fourier transform and remark that it vanishes at the origin), we
get $|\psi(x)| \le C\, |x|^{m-\alpha-n-1},$ as $x \rightarrow
\infty,$ which completes the proof of $(i)$.

To prove $(ii)$ take a function $\varphi$ in the Schwartz class. Let
$q$ be the exponent conjugate to $p$. Define the action of
$(-\Delta)^{\alpha/2}f$ on the Schwartz function $\varphi$ as
$\langle f, (-\Delta)^{\alpha/2}\varphi \rangle\,.$ By part $(i)$
and H\"{o}lder's inequality one has
$$
|\langle (-\Delta)^{\alpha/2}f, \varphi \rangle|= |\langle f,
(-\Delta)^{\alpha/2}\varphi \rangle| \le C\, \|f\|_p\,
\|(-\Delta)^{\alpha/2}\varphi\|_q\,,
$$
which completes the proof of $(ii).$
\end{proof}

Let us continue the proof of the sufficiency of condition (2). By
the lemma  $(-\Delta)^{\alpha/2} \phi_\epsilon$ is in $L^1$ and so
$$
\|(-\Delta)^{\alpha/2} f_\epsilon\|_p = \| f \ast
(-\Delta)^{\alpha/2} \phi_\epsilon\|_p \le \|f\|_p \,
\|(-\Delta)^{\alpha/2} \phi_\epsilon\|_1 \,.
$$
Hence $f_\epsilon \in W^{\alpha,p}.$

Next, we claim that
\begin{equation}\label{eq28}
S_\alpha(f_\epsilon, g_{1\,,\epsilon},\dotsc,g_{N\,,\epsilon})(x)
\le (S_\alpha(f,g_1,\dotsc,g_N)\ast \phi_\epsilon)(x),\quad x \in
\Rn\,.
\end{equation}
One has
\begin{equation*}
S_\alpha(f,g_1,\dotsc,g_N)(x) = \|
R_\alpha(x,t)\|_{L^2(dt/t^{2\alpha+1})}\,,
\end{equation*}
where
$$
R_\alpha(x,t) = (f \ast \chi_t)(x)-f(x) - \sum_{j=1}^{N-1}
M_j\,g_j(x)\,t^{2j}- M_N\,(g_N \ast \chi_t)(x)\,t^{2N}
$$
if $\alpha= 2N$ and
$$
R_\alpha(x,t) = (f \ast \chi_t)(x)-f(x) - \sum_{j=1}^{N}
M_j\,g_j(x)\,t^{2j}
$$
if $2N < \alpha < 2N+2\,.$ As before we have set $M_j =
\fint_{B(0,1)} |z|^{2j}\,dz\,.$  Minkowsky's integral inequality now
readily yields \eqref{eq28}.

Set
$$
D_\epsilon(x) = \| \sum_{j=1}^{N-1} M_j \,(\frac{\Delta^j f_\epsilon
(x)}{L_j}- g_{j\,,\epsilon}(x))\,t^{2j}-M_N\, \left((\frac{\Delta^N
f_\epsilon}{L_N}- g_{N\,,\epsilon})\ast \chi_t
\right)(x)\,t^{2N}\|_{L^2(dt/t^{2\alpha+1})}
$$
if $\alpha= 2N$ and
$$
D_\epsilon(x) = \| \sum_{j=1}^{N} M_j \,(\frac{\Delta^j f_\epsilon
(x)}{L_j}- g_{j\,,\epsilon}(x))\,t^{2j})\|_{L^2(dt/t^{2\alpha+1})}
$$
if $2N < \alpha < 2N+2\,.$

By \eqref{eq28}
\begin{equation*}
\begin{split}
D_\epsilon(x) & \le S_\alpha(f_\epsilon)(x) + S_\alpha(f_\epsilon,
g_{1\,,\epsilon},\dotsc,g_{N\,,\epsilon})(x)\\& \le
S_\alpha(f_\epsilon)(x) + (S_\alpha(f,g_1,\dotsc,g_N)\ast
\phi_\epsilon)(x)\,,
\end{split}
\end{equation*}
and so $D_\epsilon \in L^p.$ In particular, $D_\epsilon(x)$ is
finite for almost all $x \in \Rn.$ Thus
$$ \liminf_{t\rightarrow
0} \left|\sum_{j=1}^{N-1} M_j \,(\frac{\Delta^j f_\epsilon
(x)}{L_j}- g_{j\,,\epsilon}(x))\,t^{2j}-M_N\, \left((\frac{\Delta^N
f_\epsilon}{L_N}- g_{N\,,\epsilon})\ast \chi_t
\right)(x)\,t^{2N}\right| \,t^{-\alpha} =0 \,,
$$
for almost all $x \in \Rn$\,, if $\alpha= 2N \,,$ and
$$ \liminf_{t\rightarrow
0} \left|\sum_{j=1}^{N} M_j \,(\frac{\Delta^j f_\epsilon (x)}{L_j}-
g_{j\,,\epsilon}(x))\,t^{2j}\right| \,t^{-\alpha} =0\,,
$$
for almost all $x \in \Rn$\,, if $2N < \alpha < 2N+2\,.$ It is easy
to conclude that the only way this may happen is whenever
$$
\frac{\Delta^j f_\epsilon(x)}{L_j}= g_{j\,,\epsilon}(x), \quad 1 \le
j \le N \,,
$$
for almost all $x \in \Rn.$ Hence
$$
\frac{\Delta^j f_\epsilon}{L_j} \rightarrow g_j, \quad  1 \le j \le
N\,,
$$
in $L^p$ as $ \epsilon \rightarrow 0 .$ Since $f_\epsilon
\rightarrow f$  in $L^p$  as $\epsilon \rightarrow 0 ,$
$$
\Delta^j f_\epsilon \rightarrow \Delta^j f, \quad  1 \le j \le N\,,
$$
in the weak topology of tempered distributions. Hence
$$
\frac{\Delta^j f}{L_j} =  g_j, \quad  1 \le j \le N\,.
$$

We claim now that the functions $f_\epsilon$ are uniformly bounded
in $W^{\alpha,p}\,.$ Indeed, by the proof of necessity of condition
(2) and by \eqref{eq28},
\begin{equation*}
\begin{split}
\| (-\Delta)^{\alpha/2} f_\epsilon\|_p & \simeq
\|S_{\alpha}(f_\epsilon, \Delta f_\epsilon / L_1,\dotsc, \Delta^N
f_\epsilon / L_N)\|_p \\ & \le  \|S_{\alpha}(f, \Delta f /
L_1,\dotsc, \Delta^N f / L_N)\|_p < \infty.
\end{split}
\end{equation*}
Hence there exist a function $h \in L^p$ and a sequence $\epsilon_j
\rightarrow 0$ as $j \rightarrow \infty$ such that
$$
(-\Delta)^{\alpha/2} f_{\epsilon_j} \rightarrow h \quad \text{as}
\quad j \rightarrow \infty
$$
in the weak $\star$ topology of $L^p$. On the other hand, by Lemma
2, $(-\Delta)^{\alpha/2} f$ is a tempered distribution and so
$$
(-\Delta)^{\alpha/2} f_{\epsilon} \rightarrow (-\Delta)^{\alpha/2} f
\quad \text{as} \quad \epsilon \rightarrow 0
$$
in the weak topology of tempered distributions. Therefore
$(-\Delta)^{\alpha/2} f = h \in L^p$ and the proof is complete.

\section{Final remarks}
Let $(X,d,\mu)$ be a metric measure space, that is, $X$ is a metric
space with distance $d$ and~$\mu$ is a Borel measure on $X.$ We
assume that the support of $\mu$ is $X.$ Then, given $\alpha > 0$
and $1 < p < \infty ,$ we can define the Sobolev space
$W^{\alpha,p}(X)$ as follows. Let $N$ be the unique integer such
that $2N \leq \alpha < 2N+2.$  Given locally integrable functions
$f, g_1, \dotsc, g_N$ define a square function by
$$
S_\alpha(f,g_1,g_2,\dotsc,g_N)(x)^2= \int_0^D \left|
\fint_{B(x,\,t)} \frac{R_N(y,x)}{t^\alpha}\,d\mu(y)
\right|^2\,\frac{dt}{t}, \quad x \in \Rn\,,
$$
where $D$ is the diameter of $X$ and $R_N(y,x)$ is
$$
R_N(y,x)= f(y)-f(x)- g_1(x)\,d(y,x)^2 +\dotsb -g_{N-1}(x)\,
d(y,x)^{2(N-1)}- (g_N)_{B(x,\,t)}d(y,x)^{2N}
$$
if $\alpha = 2N$  and
$$
R_N(y,x)= f(y)-f(x)- g_1(x)\,d(y,x)^2 +\dotsb -g_{N-1}(x)\,
d(y,x)^{2(N-1)}- g_N(x)\,d(y,x)^{2N}
$$
if $2N < \alpha < 2N+2\,.$  Here the barred integral stands for the
mean with respect to $\mu$ on the indicated set, $B(x,t)$ is the
open ball with center $x$ and radius $t$ and $g_{B(x,t)}$ is the
mean of the function $g$ on~$B(x,t).$

We say that a function $f$ belongs to the Sobolev space
$W^{\alpha,p}(X)$ provided $f \in L^p(\mu)$ and there exist
functions $g_1, g_2, \dotsc, g_N \in L^p(\mu)$ such that
$S_\alpha(f,g_1,g_2,\dotsc,g_N) \in L^p(\mu).$

We have seen in the previous sections that this definition yields
the usual Sobolev spaces if $X=\Rn$ is endowed with the Euclidean
distance and $\mu$ is Lebesgue measure. One can prove with some
effort that the same is true if $\Rn$ is replaced by a half-space.
Very likely this should also work for smoothly bounded domains, but
we have not gone that far.

There are many interesting questions one may ask about these new
Sobolev spaces. For instance, how do they compare, for $\alpha=1$,
with the known first order Sobolev spaces, notably those introduced
by Hajlasz in \cite{H} or the Newtonian spaces of \cite{S}\, ?  For
higher orders of smoothness one would like to compare them with
those introduced by Liu, Lu and Wheeden in \cite{LLW}. One may also
wonder about their intrinsic properties, namely, about versions of
the Sobolev imbedding theorem, the Poincar\'{e} inequality and so on.

For the Sobolev imbedding theorem the following remark might be
useful. In $\Rn$ the $L^p$ space can be characterized by means of
the following ``zero smoothness" square function:
$$
S_0(f)^2(x)= \int_0^\infty \left| f_{B(x,\,t)} - f_{B(x,\,2t)}
\right|^2\,\frac{dt}{t}, \quad x \in \Rn\,.
$$
The result is then that a locally integrable function $f$ is in
$L^p$ if and only if $S_0(f) \in L^p.$ The proof follows the pattern
described several times in this paper. One first deals with the case
$p=2$ via a Fourier transform computation. Then one introduces a
$L^2(dt/t)$-valued operator $T$ such that
$$
\|T(f)\|_{L^2 (\Rn,\, L^2(dt/t))} = c\,\|S_0(f)\|_2
$$
and one shows that its kernel satisfies H\"{o}rmander's condition.

\begin{gracies}
 We are grateful to Piotr Hajlasz, who kindly encouraged us to publish
 the paper, and to Richard Wheeden for a useful correspondence
 concerning \cite{W1} and \cite{W2}.
 This work has been partially supported by grants 2009SGR420
(Generalitat de Catalunya) and  MTM2010-15657 (Spanish Ministry of
Science). The first named author acknowledges the generous support
received from a fellowship from the Spanish Ministry of Science and
from the Ferran Sunyer Balaguer Foundation. He is thankful to
Professor Hajlasz for his hospitality at Pittsburg University and
for many enlightening conversations.
\end{gracies}

\begin{tabular}{l}
Roc Alabern\\
Departament de Matem\`{a}tiques\\
Universitat Aut\`{o}noma de Barcelona\\
08193 Bellaterra, Barcelona, Catalonia\\
{\it E-mail:} {\tt roc.alabern@gmail.com}\\ \\
Joan Mateu\\
Departament de Matem\`{a}tiques\\
Universitat Aut\`{o}noma de Barcelona\\
08193 Bellaterra, Barcelona, Catalonia\\
{\it E-mail:} {\tt mateu@mat.uab.cat}\\ \\
Joan Verdera\\
Departament de Matem\`{a}tiques\\
Universitat Aut\`{o}noma de Barcelona\\
08193 Bellaterra, Barcelona, Catalonia\\
{\it E-mail:} {\tt jvm@mat.uab.cat}
\end{tabular}

\end{document}